\definecolor{grey}{RGB}{211,211,211}
\theoremstyle{definition} 
\newtheorem{prop}{Proposition}[section] 
\newtheorem{fact}{Fact}[section] 
\newtheorem{thm}{Theorem}[section] 
\newcommand{\cC}{{\cal C}}
\newcommand{\cH}{{\cal H}}
\newcommand{\cP}{{\cal P}}
\newcommand{\ccc}{c}
\newcommand{\model}[3]{\mathcal{M}(#1, #2, \lnot #3)}
\newcommand{\problem}[3]{\mathcal{P}(#1, #2, \lnot #3)}
\newcommand{\pair}[2]{(#1, \lnot #2)}
\newcommand{\chpair}{\pair{\cC}{\cH}}
\newcommand{\funct}[2]{f(#1,  \lnot #2)}
\newcommand{\genericmodel}{\model{n}{\cC}{\cH}}
\newcommand{\genericproblem}{\problem{n}{\cC}{\cH}}
\title{Proving hamiltonian properties in connected 4-regular graphs: an ILP-based approach}
\providecommand{\keywords}[1]
{
  \small	
  \textbf{Keywords---} #1
}
\author[1]{Giuseppe Lancia}
\author[1,2]{Eleonora Pippia}
\author[1]{Franca Rinaldi}
\affil[1]{{\footnotesize Dipartimento di Scienze Matematiche, Informatiche e Fisiche - University of Udine, Via delle Scienze 206, 33100 UD, Italy}}
\affil[2]{{\footnotesize The Research Hub by Electrolux Professional S.p.A., Viale Treviso 15, 33170 PN, Italy}}
\date{}
\begin{document}

\maketitle
\begin{abstract}
In this paper we study some open questions related to the smallest order $f({\cal C},\lnot {\cal H})$ of a 4-regular graph which has a connectivity property ${{\cal C}}$ but does not have a hamiltonian property ${\cal H}$. In particular, ${\cal C}$ is either connectivity, 2-connectivity or 1-toughness and ${\cal H}$ is hamiltonicity, homogeneous traceability or traceability. A standard theoretical approach to these  questions had already been used in the literature, but in many cases did not succeed in determining the exact value of $f()$. Here we have chosen to use Integer Linear Programming and to 
 encode the graphs that we are looking for as the binary solutions to a suitable set of  linear inequalities. This way, there would exist a graph of order $n$ with certain properties if and only if the corresponding ILP had a feasible solution, which we have determined through a branch-and-cut procedure. By using our approach, we have been able to compute $f({\cal C},\lnot {\cal H})$ for all  the pairs of considered properties with the exception of ${\cal C}=$1-toughness, ${{\cal H}}=$traceability. Even in this last case, we have nonetheless significantly reduced the interval $[LB, UB]$ in which $f({\cal C},\lnot {\cal H})$ was known to lie. Finally, we have shown that for each  $n \geq f({\cal C},\lnot {\cal H})$ ($n \geq UB$ in the last case) there exists a 4-regular graph on $n$ vertices which has property ${\cal C}$ but not property ${\cal H}$.
\end{abstract}

\keywords{4-regular graph; hamiltonian graph; traceable graph; homogeneously traceable graph;  1-tough graph; 
    Integer Linear Programming; branch-and-cut.}


\section{Introduction} \label{sec:introduction}
It is well-known that both the problems of deciding whether a graph is {\em traceable}, i.e., it admits a Hamilton path,  and  is {\em hamiltonian}, i.e., it admits a Hamilton cycle, are NP-complete  \cite{GJ79} even when the graph is $k$-regular with $k \geq 3$ \cite{P94}. Similar problems concern the fact that a graph contains a Hamilton path starting at each vertex or Hamilton paths between each pair of vertices. In the former case the graph is called {\em homogeneously traceable}, in the second case  {\em 
Hamilton-connected}. All the above problems are difficult also for regular graphs and this fact gave rise to a wide search for conditions that are  necessary and/or sufficient to guarantee a given hamiltonian property (see for instance \cite{B79,B78,G91, S84} for background and general surveys on the problems  and the papers \cite{BBV90, CS13, H86, J80} considering regular graphs). Clearly connectivity is a basic necessary condition for a graph satisfying each of the above properties; moreover, every hamiltonian graph must be 2-connected, i.e., at least two vertices have to be removed to disconnect the graph. A stronger necessary property for a graph to be hamiltonian was introduced by Chv\'atal \cite{C73} and is called $1$-toughness. Given $t \in \mathbb{R}$, $t > 0$, a graph $G = (V, E)$ is $t$-tough if for each set of vertices $S$ whose removal disconnects the graph the number of connected components of the graph induced by $V \setminus S$ is at most $\frac{ |S|}{t}$. As it is easy to verify, every hamiltonian graph is 1-tough, but the reverse statement does not hold in general. On the other hand, sufficient conditions are often based on the fact that the degree of the vertices of the graph is sufficiently high to guarantee a given hamiltonian property. In particular, when $k$-regular graphs are considered, no one of these properties may be guaranteed as the number of vertices increases. The issue of establishing the minimum order  of a $k$-regular graph that has a given connectivity property $\cC$, i.e., is either 1-tough   or 2-connected  or simply connected but does not satisfy a given hamiltonian property $\cH$, i.e., is not  Hamilton-connected  or is not hamiltonian  or is not homogeneously traceable  or is not traceable  has been considered in several papers.  In particular, relevant theoretical results determine lower bounds for these minimum orders. Two relevant conditions that guarantee that a graph is either Hamilton-connected or hamiltonian were stated by Ore in the following result.

\begin{thm}\label{thm:Ore1}(Ore \cite{O63}) Let $G$ be a graph with $n \geq 3$ vertices and let $d(v)$ denote the degree of vertex $v$. If for any pair of nonadjacent vertices $v$ and $w$ it holds that:\\
$~~~$i) $d(v)+d(w)\geq n+1$, then $G$ is Hamilton-connected;\\
$~~~$ii)    $d(v)+d(w)\geq n$  
then $G$ is hamiltonian.
\end{thm}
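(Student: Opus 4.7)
The plan is to prove both parts by a common contradiction-and-rotation strategy: assume the degree condition holds but the Hamilton structure is missing, then exhibit it by rotating a suitably chosen long path.

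For part (ii), I would first pass to an edge-maximal counterexample. Adding edges only increases degrees, so it preserves the Ore condition, while non-hamiltonicity can only fail after such additions. Iterating, I reach an edge-maximal non-hamiltonian graph $G^\star$ still satisfying $d(v)+d(w)\geq n$ for all non-adjacent $v,w$. By maximality, for any two non-adjacent vertices $u,v$ of $G^\star$ the graph $G^\star + uv$ is hamiltonian; thus $G^\star$ contains a Hamilton path $v_1 v_2 \cdots v_n$ with $v_1 v_n \notin E(G^\star)$. Defining
\[
S=\{\,i\in\{1,\ldots,n-1\}:v_1 v_{i+1}\in E(G^\star)\,\},\quad T=\{\,i\in\{1,\ldots,n-1\}:v_i v_n\in E(G^\star)\,\},
\]
one has $|S|=d(v_1)$, $|T|=d(v_n)$, $S\cup T\subseteq\{1,\ldots,n-1\}$, and the degree condition forces $|S|+|T|\geq n$, so pigeonhole yields $S\cap T\neq\emptyset$. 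For any $i\in S\cap T$ the sequence $v_1 v_2\cdots v_i v_n v_{n-1}\cdots v_{i+1} v_1$ is a Hamilton cycle in $G^\star$, a contradiction.

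For part (i), I would adapt the same idea with prescribed endpoints. Fix $s,t\in V(G)$ and suppose no Hamilton $s$-$t$ path exists; take a longest $s$-$t$ path $P:s=v_1,v_2,\ldots,v_k=t$, so $k<n$, and pick a vertex $u\notin V(P)$. Analysing index sets of neighbors of $v_1$ and $v_k$ along $P$ as before, the strengthened degree sum $d(v_1)+d(v_k)\geq n+1$ provides exactly the slack needed so that an index $i$ can be found for which either a direct re-routing through $u$ or an edge linking two consecutive vertices of $P$ to $u$ produces a strictly longer $s$-$t$ path, contradicting the maximality of $P$ and therefore forcing $k=n$.

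The main obstacle is part (i): unlike in the hamiltonicity proof, the rotation must preserve \emph{both} endpoints of the path, whereas the classical P\'osa rotation swaps one endpoint. This rigidity is precisely what is compensated by strengthening the Ore bound from $n$ to $n+1$, and the bulk of the technical work is to verify that the extra index provided by the pigeonhole indeed corresponds to a re-routing compatible with keeping $s$ and $t$ as endpoints, rather than merely producing a Hamilton cycle or a long path between other endpoints.
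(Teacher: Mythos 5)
The paper does not prove this statement at all: Theorem~\ref{thm:Ore1} is quoted as a known result of Ore \cite{O63}, so there is no internal proof to compare against. Judged on its own, your part~(ii) is the standard and complete Ore argument (pass to an edge-maximal non-hamiltonian supergraph, take the Hamilton path guaranteed by maximality, and use the two index sets plus pigeonhole to close it into a cycle); that part is fine.

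Part~(i), however, has a genuine gap, and you have essentially flagged it yourself: the step where ``the extra index provided by the pigeonhole \ldots corresponds to a re-routing compatible with keeping $s$ and $t$ as endpoints'' is precisely the content of the theorem, and it is asserted rather than carried out. Two concrete problems. First, the hypothesis bounds $d(v)+d(w)$ only for \emph{nonadjacent} pairs, while Hamilton-connectivity demands a Hamilton $s$--$t$ path for every pair, including adjacent ones; for adjacent $s,t$ the inequality $d(v_1)+d(v_k)\geq n+1$ that your pigeonhole relies on is simply not available. Second, even for nonadjacent $s,t$, on a longest $s$--$t$ path $P$ with $k<n$ vertices the neighbours of $v_1$ and $v_k$ need not lie on $P$, so the index sets live in a ground set of size $k-1<n-1$ and the count $d(v_1)+d(v_k)\geq n+1$ does not translate into an intersection of the kind you need; and when a crossing index does exist it produces a \emph{cycle} through $V(P)$, which changes both endpoints when you try to absorb the outside vertex $u$. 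A correct route is to fix $s,t$, reduce to showing that the graph obtained by adding a new degree-$2$ vertex joined to $s$ and $t$ is hamiltonian, or equivalently to prove the closure-type lemma: if $u,w$ are nonadjacent with $d(u)+d(w)\geq n+1$, then any Hamilton $s$--$t$ path of $G+uw$ forced through the edge $uw$ can be rerouted by an exchange at a position $i$ with $z_i\in N(u)$ and $z_{i+1}\in N(w)$; a careful count shows both index sets sit inside a common set of size $n-2$ while their sizes sum to at least $n-1$, so they intersect. That counting step, not the statement of a plan, is the actual proof of part~(i).
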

The above theorem implies, in particular, that every $k$-regular graph is Hamilton-connected if it has order $n \leq 2k-1$ and is hamiltonian if $3 \leq n \leq 2k$.

The  two stronger results concerning regular graphs are the following.
\begin{thm}\label{thm:CS} (Cranston and Suil \cite{CS13})
i) Every connected $k$-regular graph with at most $2k+2$ vertices is hamiltonian. Furthermore, all connected $k$-regular graphs on $2k+3$ vertices (when $k$ is even) and $2k+4$ vertices (when $k$ is odd) that are nonhamiltonian can be characterized.\\
ii) Every connected $k$-regular graph with at most $3k+3$ vertices has a Hamilton path. Furthermore, all connected $k$-regular graphs on $3k+4$ vertices (when $k\geq 6$ is even) and $3k+5$ vertices (when $k\geq5$ is odd) that have no Hamilton path can be characterized.
\end{thm}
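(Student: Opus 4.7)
The plan is to attack both statements by extremal analysis of a longest cycle (for part (i)) or longest path (for part (ii)) in a putative counterexample, and to derive structural constraints that become incompatible with $k$-regularity when the order is small.

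For part (i), I would first note that Ore's condition (Theorem~\ref{thm:Ore1}) already settles $n\leq 2k$, since any two vertices of a $k$-regular graph have degree sum exactly $2k$. For $n\in\{2k+1,2k+2\}$, let $G$ be a connected $k$-regular non-hamiltonian graph, fix a longest cycle $C$, and pick $v\in V(G)\setminus V(C)$. The classical rotation--extension argument yields two constraints: if $N_C(v)=\{v_{i_1},\dots,v_{i_k}\}$, then the set of cyclic successors $\{v_{i_j+1}\}$ is independent in $G$ and disjoint from $N(v)$, for otherwise one can splice $v$ into $C$ to obtain a strictly longer cycle. Hence $\{v\}\cup\{v_{i_j+1}\}$ is an independent set of size $k+1$, and a careful count of edges from this set to $V(C)\setminus N[v]$, combined with $k$-regularity, rules out $n\leq 2k+2$. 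At $n=2k+3$ (for $k$ even) or $n=2k+4$ (for $k$ odd) the same inequalities are forced to be tight, which pins down an explicit finite family of extremal graphs and yields the characterization.

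For part (ii), I would run the analogous argument on a longest path $P=v_0 v_1\cdots v_\ell$, using P\'osa-style rotations at the endpoints to produce many ``rotation-equivalent'' endpoints, each of which (by maximality of $P$) sends all $k$ of its neighbors into $V(P)$. Independence and adjacency obstructions --- any shortcut that would create a Hamilton cycle on $V(P)$ can be spliced with a connection to the off-path component (which exists by connectivity) to extend $P$ --- translate into a lower bound $|V(P)|\geq 3k+4$, and hence $n\geq 3k+4$ whenever $G$ is non-traceable. The characterization at $n\in\{3k+4,3k+5\}$ is then read off by turning every slack inequality into an equality and enumerating the resulting configurations.

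The main obstacle, in my view, is not the order bounds themselves but the extremal characterization: the independence/rotation arguments are soft enough to leave several candidate structures, and pruning them to an explicit list requires tracking the parity of $k$, the size and connectivity of the off-cycle or off-path subgraph, and a handful of small configuration subcases. In particular, the split between $k$ even and $k$ odd in both statements reflects a parity/congruence obstruction that surfaces only at the boundary, and nailing down every extremal family precisely is where essentially all the work sits.
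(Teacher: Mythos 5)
First, note that the paper does not prove this statement at all: it is quoted verbatim from \cite{CS13} and used as a black box, so there is no internal proof to compare your attempt against; I can only judge the sketch on its own merits. Judged that way, it has a genuine gap. The rotation--extension step in part (i) silently assumes that the off-cycle vertex $v$ has all $k$ of its neighbours on the longest cycle $C$; only then do you get $k$ distinct cyclic successors and an independent set of size $k+1$. For a graph that is merely \emph{connected} this fails in exactly the critical cases: the component of $G-V(C)$ containing $v$ may attach to $C$ through a single cut vertex, and the extremal non-hamiltonian graphs at $n=2k+3$ (two near-cliques glued at a cut vertex, e.g.\ the graph of Fig.~\ref{Figure:n11} for $k=4$) are of precisely this form. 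They are not detected by a longest-cycle independence count; they are detected by a block decomposition: every end block $B$ with cut vertex $c$ satisfies $|B|\ge k+1$ because each vertex of $B\setminus\{c\}$ has all its neighbours inside $B$, and the handshake lemma applied inside the end blocks is what produces the $k$ even / $k$ odd split --- not tightness of rotation inequalities, as you suggest. The standard argument then disposes of the $2$-connected case by invoking known Hamiltonicity theorems for $2$-connected regular graphs, and handles the cut-vertex case by this counting.

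Second, for part (ii) the assertion that P\'osa rotations on a longest path ``translate into a lower bound $|V(P)|\ge 3k+4$'' is not substantiated and, as far as I can tell, is not attainable by that method alone: for $k$-regular graphs, rotation and independence arguments yield bounds of order $2k$, not $3k$. The actual route to the $3k+3$ threshold goes through deep results on \emph{2-connected} regular graphs (Jackson-type theorems guaranteeing a Hamilton cycle in every $2$-connected $k$-regular graph on at most roughly $3k$ vertices), combined with an analysis of the block tree in the merely connected case; that is a different and substantially heavier toolkit than the one you propose. The opening reduction via Ore's theorem for $n\le 2k$ is fine, but the core of both parts is missing, and the claimed characterizations cannot be ``read off'' from inequalities that have not actually been established.
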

\begin{thm}(Hilbig \cite{H86})\label{thm:hilbig} Let  $G$ be a $2$-connected, $k$-regular graph with at most $3k+3$ vertices. Then $G$ is hamiltonian or $G$ is the Petersen graph $P$ or $G$ is the $3$-regular graph obtained from $P$ by replacing one vertex with a triangle.
\end{thm}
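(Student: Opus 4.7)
The plan is to argue by contradiction: suppose $G$ is a $2$-connected, $k$-regular graph on $n \leq 3k+3$ vertices that is not hamiltonian, and show that $G$ must be the Petersen graph $P$ or the $3$-regular graph $P'$ obtained from $P$ by expanding one vertex into a triangle. The starting point is to select a longest cycle $C$ in $G$, which by assumption is not Hamilton; then any connected component $H$ of $G - V(C)$ has, by $2$-connectivity, at least two distinct neighbors on $C$.

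First I would exploit the maximality of $C$ to constrain the positions along $C$ of the vertices adjacent to $V(H)$. The classical Pósa rotation / Woodall hopping argument shows that if $x, y \in V(C)$ are both adjacent to some fixed $u \in V(H)$, then their successors $x^+, y^+$ along $C$ cannot be adjacent in $G$ and cannot both be adjacent to a common vertex of $H$ extending $u$; otherwise $C$ could be rerouted through $H$ to produce a strictly longer cycle. Iterating this for all $u \in V(H)$ produces a large independent set of vertices lying alternately along $C$ and in $H$, and converting this independence into a degree inequality via $k$-regularity forces $|C| \geq 2k + |V(H)|$ (or a close analog). Combined with $n \leq 3k+3$, this already bounds $|V(H)|$ and the number $t$ of attachment vertices on $C$ by small constants.

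Next I would carry out a case analysis on $(k, |V(H)|, t)$. For each admissible tuple, the $k$ edges at every vertex of $H$ must be distributed between $V(H)$ and the $t$ attachment vertices, while each attachment vertex contributes to the degree sum of both the arc structure of $C$ and of $H$; double counting these contributions against the spreading condition yields tight numerical relations. For $k \geq 4$ every surviving configuration should contradict either $n \leq 3k+3$ or $2$-connectivity, ruling out the non-hamiltonian case entirely.

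The genuinely delicate case is $k = 3$, since this is where $P$ and $P'$ actually live. Here $n \leq 12$ and the feasible tuples $(|C|, |V(H)|, t)$ are extremely restricted, so the proof reduces to reconstructing $G$ from the structure of $C$, $H$, and the attachment pattern, subject to $3$-regularity and to the non-extendability of $C$. The main obstacle is to handle all such reconstructions exhaustively and to show that each either collapses (by violating $2$-connectivity or by admitting a strictly longer cycle through a less obvious rotation) or is forced to coincide with $P$ or $P'$; this bookkeeping is finite but has many symmetric subcases, and it is precisely here that the two sporadic exceptional graphs emerge as the only survivors.
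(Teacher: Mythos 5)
First, note that the paper does not prove this statement at all: it is quoted verbatim from Hilbig \cite{H86} as a known external result (whose proof in turn builds on Jackson's theorem that every $2$-connected $k$-regular graph on at most $3k$ vertices is hamiltonian). So there is no in-paper proof to compare against, and your proposal has to stand on its own.

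As it stands, it does not. The outline (longest cycle $C$, component $H$ of $G-V(C)$, rotation/hopping to make the successors of the attachment vertices independent and non-adjacent to $H$) is the correct classical opening, but the step that is supposed to do all the work --- ``converting this independence into a degree inequality \ldots forces $|C| \geq 2k + |V(H)|$ (or a close analog)'' --- is exactly the part that is hard, and the hedge ``or a close analog'' signals that you do not have the inequality. The naive count coming from the independent set $\{x_1^+,\dots,x_t^+\}\cup\{u\}$ only yields Dirac/Erd\H{o}s--Gallai-type bounds of order $2k$; pushing the threshold to $3k$ (Jackson) and then to $3k+3$ with the two Petersen exceptions (Hilbig) requires a much finer analysis of the segments of $C$ between attachment vertices and of the edges between them, and is the substance of two long papers. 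Moreover, even granting your inequality, combining it with $n\le 3k+3$ gives only $|V(H)|\le (k+3)/2$, which grows with $k$; so the tuples $(k,|V(H)|,t)$ are \emph{not} bounded by small constants, the proposed case analysis is not finite uniformly in $k$, and the claim that ``for $k\ge 4$ every surviving configuration should contradict'' the hypotheses is an unverified hope rather than an argument. The $k=3$ reconstruction of $P$ and $P'$ is likewise only gestured at. In short: right opening move, but the quantitative core and the exhaustive classification --- which is where the theorem actually lives --- are missing.
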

The above theorems do not cover some issues concerning 4-regular graphs.  By Theorem \ref{thm:hilbig}, each 2-connected graph with $n \leq 15$ is hamiltonian and thus homogeneously traceable and traceable. On the other hand, as shown in \cite{BBV90}, there exists a 4-regular 1-tough graph with 18 vertices which is not hamiltonian. These facts leave open the following question: 
{\em Is it true that every 4-regular, 1-tough graph with at most 17 vertices is hamiltonian?} Bauer, Broersma and Veldman conjectured in \cite{BBV90} that this should be the case. We also observe that Theorem \ref{thm:CS} does not determine   the minimum order of a connected $k$-regular graph which is not traceable when $k=4$. These two facts seem to suggest that to answer the above questions (and similar open questions involving other pairs of connectivity conditions and hamiltonian properties) for 4-regular graphs one might need a different approach than a ``standard'' mathematical proof. 
In this paper, extending our preliminary work  \cite{LPR20}, we address the issue by adopting an Integer Linear Programming (ILP) approach.
We have proceeded as follows.

For each connectivity property 
$\mathcal{C}\in\{1$-toughness (1t), 2-connectivity (2c), connectivity (c)$\}$ and each hamiltonian property $\mathcal{H} \in \{$Hamilton-connectivity (HC), hamiltonicity (H), homogeneous traceability (HT), traceability (T)$\}$ we consider the problem $P(n, \cC, \lnot \cH)$ defined as

\vspace{2mm}
{\bf Problem $ P(n, \cC, \lnot \cH)$}: does there exist a 4-regular graph with $n$ vertices satisfying property $\cC$ and not satisfying property $\cH$?

\vspace{2mm} For each pair of properties $(\cC, \cH)$, we call any 4-regular graph having property $\cC$ but not property $\cH$ a $\chpair$-graph  and 
denote by  $\funct{\cal C}{\cal H}$ the minimum  $n$ for which problem $ P(n, \cC, \lnot \cH)$ has a positive answer, i.e., the minimum number of vertices in  a $(\cC, \lnot \cH)$-graph. For  each unknown value $\funct{\cal C}{\cal H}$  we have formulated problem $P(n, \cC, \lnot \cH)$ as an ILP problem  whose feasible solutions correspond to  the $\chpair$-graphs with $n$ vertices. Then we have solved the problem for increasing values of $n$ (chosen in a suitable range) so that $\funct{\cal C} {\cal H}$ was determined as the minimum $n$ for which the ILP model admits a feasible solution. Our computations allowed to almost complete Table \ref{tab:result1}, where the values $f(\cC, \lnot \cH)$ determined using our approach are written  in bold and the bounds previously known are written in normal font. In particular, we have shown that the question posed by Bauer, Broersma and Veldman has a positive answer and that every connected 4-regular graph with less than 18 vertices is traceable.   Furthermore, from our results it follows that for each considered pair of properties $\cC, \cH$ and for every $n \geq f\chpair$ there exists a $\chpair$-graph with $n$ vertices. The only value that remains undetermined  is $f(1t, \lnot T)$, i.e., the minimum order of a 4-regular 1-tough graph which is not traceable. However, even for this case, we were able to restrict the range to which this value belongs. We remark that the three values $f(\cC, \lnot HC)$, that we report in Table \ref{tab:result1} for sake of  completeness, were already known.

\begin{table}[t]

    \centering
    {\footnotesize
    \begin{spacing}{1.5}
\begin{tabular}{c||c|c|c|c}
& Hamilton- 	& hamiltonicity 		& homogeneous  & traceability \\
& connectivity & & traceability  & \\
\hhline{==|=|=|=}
connectivity & $f(c, \lnot HC) = 8$ 		& $f(c, \lnot H) = 11$ 			 & $f(c, \lnot HT) = 11$ 			& $f(c, \lnot T) \geq 16$  \ \\
		& 		&  & & $\bf{ f(c, \lnot T) = 18}$	\\
\hline
$2$-connectivity &  $f(2c, \lnot HC) = 8$ 		&  $f(2c, \lnot H) \geq 16$ 		 & $f(2c, \lnot HT) \geq 16$			& $f(2c, \lnot T) \geq 16$ \\\
		& 	& $\bf{ f(2c, \lnot H) = 16}$ & $\bf{ f(2c, \lnot HT) = 16}$ & $\bf{ f(2c, \lnot T) = 22}$	\\
\hline
$1$-toughness & $f(1t, \lnot HC) = 8$ 		& $ 16 \leq f(1t, \lnot H) \leq 18 $			 & $ f(1t, \lnot HT) \geq 16$			& $ f(1t, \lnot T) \geq 16$ \\
		&  & $\bf{f(1t, \lnot H) =18}$ & $\bf{f(1t, \lnot HT) = 20}$ &  $\bf{22 \leq f(1t, \lnot T) \leq 40}$	\\
\end{tabular}
\end{spacing}
}
\caption{Known and new bounds for the minimum order $f(\cC, \lnot \cH)$ of a 4-regular graph that satisfies property $\cC$ but not property $\cH$. The new bounds appear in 
bold.} \label{tab:result1}
\end{table}

As it is well known, a feasibility ILP  problem consists in finding an integer solution to a finite set of linear inequalities. Since the number of inequalities required to model every problem $ P(n, \cC, \lnot \cH)$ happens to be exponential in $n$,  all the models were solved by using a branch and cut procedure. Despite the values  of $n$ used in the computations are relatively small (always less than 22), the dimension and the structure of the ILP models make their straightforward solution impossible in a reasonable time. For this reason we have adopted two fundamental strategies to reduce the computation times: a preliminary analysis that allowed us to conveniently split each model in few  subproblems in which some variables may be fixed  and the use of a symmetry-breaking technique called {\em orbital branching} \cite{OLRS11} to reduce the symmetry of the subproblems. 

The use of ILP as a technique to design a combinatorial 
object with given properties (such as, for instance, a counterexample to some hypothesis that one might have formulated)
is not new, but is not as popular as it should probably be. For instance, Pulaj et al. applied   ILP to  study the size of counterexamples to the {\em union-closed set conjecture} 
(\cite{P17, PRT20}), while 
 Caprara et al.\cite{Cap15} used ILP to find 
counterexamples to a property that fractional bin packing solutions should satisfy when rounded up to integer. Finally,
in \cite{TSSW96}, Trevisan et al. used ILP to build ``gadgets'' that
can turn a combinatorial problem into another. Through these gadgets, the authors were able to construct instances which they used to
improve the approximability/inapproximability factors of some important combinatorial optimization problems.

The remainder of the paper is organized as follows. In Section \ref{sec:notation} we introduce the notation and recall some known results. In Section \ref{sec:preliminary} we describe a preliminary analysis about 4-regular, 2-connected graphs that are not 1-tough. Also this analysis is done using an ILP method. In Section \ref{sec:model} we present our ILP models, the  branch-and-cut procedure to solve them and some strategies required to obtain an effective procedure. The obtained results are described  in Section \ref{sec:results}.  Section \ref{sec:computationalresults} is devoted to discuss the main implementation issues and the computational experiments. Finally,  we draw some conclusions in Section \ref{sec:conclusions}. 

\section{Notation and known results}
\label{sec:notation}

Let $G = (V, E)$ be an undirected graph. The graph is called {\em $k$-regular} if every vertex has degree $k$.
For each $S \subseteq V$  we denote by $\partial(S)$ the set of edges of $G$ having an endpoint in $S$ and the other in $V \setminus S$. Moreover, we denote by $G[S]$ the subgraph of $G$ induced by $S$, i.e., the graph with vertex set  $S$ and edge set $E(S)$, the set containing all the edges of $E$ with both endpoints in $S$.

The graph $G$ is called {\em connected} if it contains a path between each pair  of vertices, and is called {\em 2-connected} if the graph $G[V \setminus \{i\}]$ is connected for each vertex $i \in V$. Let $\ccc(G)$ denote the number of connected components of $G$. The graph $G$ is called {$t$-tough}, $t \in \mathbb{R}_{+}$, if  for every subset $S\subseteq V$ with $\ccc(G[V \setminus S]) > 1$ it is $|S| \ge t\, \ccc(G[V \setminus S])$. In particular, $G$ is 1-tough if one cannot create $c$ components by removing less than $c$ vertices. Clearly, every 1-tough graph is 2-connected and thus connected. We remark that  the problem of deciding if a graph is $t$-tough is NP-hard even for $t=1$ \cite{BHS90} and  for regular graphs  \cite{BHMS97}. For an excellent survey on toughness in graphs the reader is referred to   the paper by Bauer, Broersma, and Schmeichel \cite{BBS06}.

A Hamilton cycle (or path) of $G$ is a cycle (respectively, a path)  that visits each vertex of $V$ exactly once.  A graph $G$ is called {\em traceable}  if it contains a Hamilton path and is called {\em hamiltonian}  if it contains a Hamilton  cycle. Moreover, $G$ is called {\em homogeneously traceable} if for each vertex $i \in V$ it contains a Hamilton path beginning at $i$ and is called
 {\em Hamilton-connected} if for each pair of vertices $i, j \in V$ it contains a Hamilton path starting at $i$ and ending in $j$. The next claims collect some properties that immediately follow from the above definitions. 
 
 \begin{fact}\label{prop:implications1}
Let $G$ be a graph with at least three vertices. Then: 
if $G$ is Hamilton-connected then $G$ is hamiltonian, if $G$ is hamiltonian then $G$ is homogeneously traceable and if $G$ is homogeneously traceable then $G$ is traceable.

\end{fact}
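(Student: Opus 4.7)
The plan is to verify the three implications in sequence, each following almost directly from the definitions; the ``at least three vertices'' hypothesis is needed only to rule out degenerate one- or two-vertex cases where a Hamilton cycle does not formally make sense.

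For the first implication, Hamilton-connected implies hamiltonian, I would argue as follows. Since $G$ has $n \geq 3$ vertices and is Hamilton-connected, it is in particular connected, so it contains at least one edge $\{u,v\} \in E$. By Hamilton-connectedness there is a Hamilton path $P$ whose endpoints are $u$ and $v$. Closing $P$ with the edge $\{u,v\}$ produces a cycle that visits every vertex exactly once, i.e.\ a Hamilton cycle, so $G$ is hamiltonian.

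For the second implication, hamiltonian implies homogeneously traceable, let $C = v_1 v_2 \cdots v_n v_1$ be a Hamilton cycle of $G$. For any vertex $i \in V$, relabel so that $i = v_1$; then deleting the single edge $\{v_n, v_1\}$ from $C$ yields the sequence $v_1 v_2 \cdots v_n$, which is a Hamilton path beginning at $v_1 = i$. Hence a Hamilton path exists from every starting vertex, which is exactly the definition of homogeneous traceability.

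The third implication, homogeneously traceable implies traceable, is essentially tautological: the existence of a Hamilton path starting at every vertex $i$ entails, in particular, the existence of \emph{some} Hamilton path, which is the definition of traceability. There is no real obstacle in the argument; the whole fact is a chain of one-line observations, and the role of the assumption $n \geq 3$ is merely to make the notion of a Hamilton cycle meaningful in the first implication.
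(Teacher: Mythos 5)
Your proof is correct: the paper states this fact without proof as an immediate consequence of the definitions, and your three one-line verifications (closing a Hamilton $u$--$v$ path with the edge $\{u,v\}$, which exists since a connected graph on $n\ge 3$ vertices has an edge and is not itself a path edge when $n\ge 3$; deleting a cycle edge incident to the chosen start vertex; and specializing "every start vertex" to "some start vertex") are exactly the standard arguments the authors have in mind.
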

\begin{fact}\label{prop:implications2}
Every homogeneously traceable graph with at least three vertices  is 1-tough.
\end{fact}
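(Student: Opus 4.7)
The plan is to argue by contradiction, exploiting the ability to start a Hamilton path at \emph{any} chosen vertex, which is the extra leverage that homogeneous traceability gives over plain traceability.

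Suppose, for contradiction, that $G$ is homogeneously traceable with $|V| \geq 3$ but not $1$-tough. Then there exists $S \subseteq V$ with $\ccc(G[V \setminus S]) > 1$ and $\ccc(G[V\setminus S]) > |S|$. First I would dispose of the case $S = \emptyset$: a homogeneously traceable graph is in particular traceable, hence connected, so $\ccc(G) = 1$, which rules out $S = \emptyset$. Thus $S$ is nonempty, and I may pick a vertex $v \in S$.

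Next I invoke homogeneous traceability: there is a Hamilton path $P = v_1 v_2 \cdots v_n$ of $G$ with $v_1 = v \in S$. The key combinatorial step is to delete from $P$ the vertices belonging to $S$ and count the maximal sub-paths of $P$ that remain. Writing the positions of the $S$-vertices along $P$ as $1 = p_1 < p_2 < \cdots < p_{|S|}$, the leftover sub-paths occupy the index intervals $(p_i, p_{i+1})$ for $i = 1,\dots,|S|-1$ together with possibly $(p_{|S|}, n]$, giving at most $|S|$ non-empty sub-paths (we save one compared to the generic bound $|S|+1$ precisely because we arranged $v_1 \in S$).

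Each such sub-path consists of consecutive vertices in $V \setminus S$ joined by edges of $G$, so its vertices all lie in a single connected component of $G[V \setminus S]$. Since $P$ covers every vertex of $V \setminus S$, the union of these sub-paths meets every component of $G[V \setminus S]$, hence the number of components is at most the number of sub-paths. Chaining the inequalities,
\[
\ccc(G[V \setminus S]) \;\leq\; \#\{\text{sub-paths}\} \;\leq\; |S|,
\]
contradicting the assumption $\ccc(G[V \setminus S]) > |S|$. This contradiction establishes that $G$ is $1$-tough.

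There is no serious obstacle here; the only subtle point, and the one I would make sure to highlight, is the choice $v_1 \in S$: without this, the sub-path count would only yield $|S| + 1$, giving merely toughness strictly less than $1$ (which is all one gets from traceability alone). The fact that homogeneous traceability lets us place the start of the Hamilton path \emph{inside} $S$ is exactly what upgrades the bound to $\ccc(G[V\setminus S]) \leq |S|$.
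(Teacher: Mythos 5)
Your argument is correct and is essentially the paper's own proof: take a Hamilton path starting at a vertex of $S$, observe that deleting the $|S|$ vertices of $S$ (one of which is the path's first vertex) leaves at most $|S|$ subpaths, and conclude that $G[V\setminus S]$ has at most $|S|$ components. The only differences are cosmetic — you cast it as a contradiction and explicitly dispose of the $S=\emptyset$ case, which the paper handles implicitly by quantifying over nonempty $S$.
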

\begin{proof} Assume that the graph $G = (V, E)$ is homogeneously traceable and, given a nonempty $S \subset V$, let $P$ be any Hamilton path beginning at a vertex of $S$. Since by removing from $P$ the vertices of $S$ one obtains at most $|S|$ subpaths of $P$ and each node of $V \setminus S$ lies on exactly one of these subpaths, the graph $G[V\setminus S]$ has at most $|S|$ connected components. 
\end{proof}

Let us now consider the hamiltonian properties of the $4$-regular graphs and, in particular, what is already known about the minimum order $f(\cC, \lnot \cH)$ of a 4-regular graph that satisfies property $\cC \in \{$connectivity (c), 2-connectivity (2c), 1-toughness (1t)$\}$ and does not satisfy the property $\cH \in \{$Hamilton-connectivity (HC), hamiltonicity (H), homogeneous traceability (HT), traceability (T)$\}$.  As far as  property $HC$ is concerned,  it is easy to verify that the complete bipartite graph $K_{4,4}$ is 1-tough and does not contain any Hamilton path connecting  two nonadjacent vertices, thus $f(1t, \lnot HC) \leq 8$. On the other hand, by Theorem \ref{thm:Ore1} i),    $f(\cC, \lnot HC) > 7$ for any considered connectivity property $\cC$. This implies  $f(\cC, \lnot HC) = 8$ for any  property $\cC$. With regards to the other hamiltonian properties, we observe that any 2-connected 4-regular graph of order at most 15 is hamiltonian by Theorem \ref{thm:hilbig} and, by Fact \ref{prop:implications1}, this implies $f(\cC, \lnot \cH) \geq 16$ for each $\cC \in \{2c, 1t\}$ and $\cH \in \{H, HT, T\}$. Moreover, $f(c, \lnot H) \geq 11$ and $f(c, \lnot T) \geq 16$ by Theorem \ref{thm:CS}. Since the nonhamiltonian graph with 11 vertices in Fig.~\ref{Figure:n11} (reported in \cite{CS13}) does not contain any Hamilton path starting at vertex $v$,  we may conclude that $f(c, \lnot HT) = f(c, \lnot H) = 11$. Finally, the upper bound 18 for $f(1t, \lnot H)$ is due to the 1-tough but not hamiltonian graph  in Fig.~\ref{Figure:n18} which has been proposed in \cite{BBV90}.

Thanks to the above remarks we can fill Table \ref{tab:result1} with the known lower and upper bounds on the values $f(\cC, \lnot \cH)$ (reported not in bold).

\begin{figure}[ht] 
\centering 

\begin{minipage}[t]{0.45\linewidth}
\centering

\begin{tikzpicture}[scale=0.7]
 create the vertex
\foreach \x in {-1, 0.5, 3.5, 5}{
\draw[fill] (\x,0) circle (1.5pt);
\draw[fill] (\x,2.4) circle (1.5pt);
};

\draw[fill] (-2.5,1.2) circle (1.5pt);
\draw[fill] (6.5,1.2) circle (1.5pt);

\node[shape=circle, radius=2pt] (v) at (2,1.2){$v$} ;

\draw(0.5,0) -- (-1,0);
\draw (0.5,0) -- (-1,2.4);
\draw (0.5,0) -- (-2.5,1.2);
\draw (0.5,2.4) -- (-1,0);
\draw (0.5,2.4) -- (-1,2.4);
\draw (0.5,2.4) -- (-2.5,1.2);
\draw (-1,0) -- (-1,2.4);
\draw (-1,0) -- (-2.5,1.2);
\draw (-1,2.4) -- (-2.5,1.2);

\draw (0.5,2.4) -- (v);
\draw (0.5,0) -- (v);
\draw (3.5,0) -- (v);
\draw (3.5,2.4) -- (v);
\draw (3.5,0) -- (5,2.4);
\draw (3.5,0) -- (5,0);
\draw(3.5,0) -- (6.5,1.2);
\draw (3.5,2.4) -- (5,2.4);
\draw (3.5,2.4) -- (5,0);
\draw(3.5,2.4) -- (6.5,1.2);
\draw (5,0) -- (5,2.4);
\draw (5,2.4) -- (6.5,1.2);
\draw (5,0) -- (6.5,1.2);
\draw (5,2.4) -- (6.5,1.2);

\node (v) at (0,-1.5){};
%



%

%

\end{tikzpicture}
\caption{Connected non-homogeneously traceable graph with  $n = 11$.} \label{Figure:n11}
\end{minipage}
\hspace{0.5cm}
\begin{minipage}[t]{0.45\linewidth}
\centering
\begin{tikzpicture}[scale=0.9]

\draw (0,1) -- (1,1);

\foreach \x in {0,1,2,3,4,5}{
\draw[fill] (\x,1) circle (1.5pt);
\draw[fill] (\x,2) circle (1.5pt);};

\foreach \x in {0.5, 2.5, 4.5}{
\draw[fill] (\x,0) circle (1.5pt);
\draw[fill] (\x,3) circle (1.5pt);};

\foreach \x in {0, 2, 4}{
\draw (\x, 1) -- (\x+1, 1);
\draw (\x, 2) -- (\x+1, 2);
\draw (\x, 1) -- (\x, 2);
\draw (\x+1, 1) -- (\x+1, 2);
\draw (\x, 1) -- (\x+1, 2);
\draw (\x, 2) -- (\x+1, 1);};

\foreach \x in {0.5, 2.5, 4.5}{
\draw (\x, 0) -- (\x-0.5,1);
\draw (\x, 0) -- (\x+0.5,1);
\draw (\x, 3) -- (\x-0.5,2);
\draw (\x, 3) -- (\x+0.5,2);};

\draw (0.5,0) -- (2.5,0) -- (4.5,0);
\draw (0.5,3) -- (2.5,3) -- (4.5,3);
\path[-](0.5,0) edge [bend right=30] (4.5,0);
\path[-](0.5,3) edge [bend left=30] (4.5,3);
\end{tikzpicture}
\caption{ 1-tough nonhamiltonian graph with $n=18$.}
\label{Figure:n18}
\end{minipage}

\end{figure}

\section{Replacing 1-thougness by 2-connectivity: a preliminary analysis}\label{sec:preliminary}
Our strategy is based on the use of ILP to model each problem $P(n, \cC, \lnot \cH)$, i.e., the problem to find if there exists a 4-regular graph satisfying property $\cC$ but not property $\cH$. The variables of the model represent the edges of the sought graph. 
While it is easy to state a set of constraints which imply that a graph is 2-connected (or just connected), dealing with the constraints which enforce a graph to be 1-tough is not a simple task. Indeed, to determine if a graph is 1-tough is NP-complete \cite{BHS90}. Since every 1-tough graph is 2-connected,  even when solving problems  $P(n, 1t, \lnot \cH)$ it is then convenient to solve the relaxed problem $P(n, 2c, \lnot \cH)$. If the search fails, one can conclude that $f(1t, \lnot \cH)$, as well as  $f( 2c, \lnot \cH)$, is larger than $n$. Otherwise, if the model succeeds and finds a 2-connected graph which is not 1-tough, one should add suitable constraints to make this graph infeasible and continue the search for a 1-tough graph. As a final remark we observe that, since by Theorem \ref{thm:hilbig} every 2-connected 4-regular graph with $n \leq 15$ is 1-tough, one may expect that for slightly larger values of $n$  the 2-connected not 1-tough graphs are quite few and may be characterized. The knowledge of these graphs will be usefully exploited to reduce the computational effort required to solve problem $P(n, 1t, \lnot \cH)$, specially for $n =16, 17$.   

This preliminary analysis has the objective to study if it is possible to have 4-regular, 2-connected graphs with $n$ vertices which are not 1-tough and, in this case, to characterize their structure (in the following we call a graph with these properties a $\varphi$-graph). Also this analysis has been carried out  using an ILP approach. Given $n$ and  $k = 2, \dots, \lfloor\frac{n}{2}\rfloor$, let us denote by $v(n, k)$ the maximum number of connected components that can result by removing $k$ vertices from a 4-regular 2-connected graph with $n$ vertices. Clearly, if $v(n, k) \leq k$ for every $k$, then every 2-connected graph with $n$ vertices is 1-tough.  Otherwise, at least for $n = 16, 17$, the structure of the $\varphi$-graphs can be easily determined.

Before presenting the ILP model, let us outline some simple properties.
Given a 4-regular 2-connected graph $G = (V, E)$ and a subset $S \subseteq V$ with  $|S| = k$, let $W_{1}, \dots, W_{t}$ be the vertex-sets of the $t$ connected components of the graph $G[V \setminus S]$ and  $n_{r} := |W_{r}|$, for $r = 1, \dots, t$.

\begin{prop} \label{prop:1} 
For each $r = 1, \dots, t$ it is $n_{r} \geq 5-k$.
\end{prop}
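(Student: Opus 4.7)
The plan is a direct degree count inside the component $W_r$; neither 2-connectivity nor anything beyond the fact that $G$ is a simple 4-regular graph is needed. The only structural input I use is that every vertex of $W_r$ has its neighborhood contained in $W_r \cup S$, which holds because $W_r$ is a connected component of $G[V \setminus S]$.

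First, I would set up the bookkeeping by letting $e_r$ be the number of edges of $G[W_r]$ and $c_r = |\partial(W_r)|$ be the number of edges between $W_r$ and $S$, and then observe the balance equation $4 n_r = 2 e_r + c_r$ obtained by summing degrees over $W_r$ in $G$. Next, since $|S| = k$ and $G$ is simple, each vertex of $W_r$ contributes at most $\min(4, k)$ to $c_r$, so $c_r \leq \min(4, k)\, n_r$. The case $k \geq 5$ is immediate because $5 - k \leq 0 \leq n_r$, so the only nontrivial range is $k \leq 4$, in which $c_r \leq k\, n_r$ and the balance equation gives $e_r \geq (4-k)\, n_r / 2$.

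Finally, I would combine this with the elementary upper bound $e_r \leq \binom{n_r}{2}$ coming from simplicity to obtain
\[
(4 - k)\, n_r \;\leq\; n_r(n_r - 1),
\]
and dividing by $n_r > 0$ yields $n_r \geq 5 - k$. I do not foresee any real obstacle: the statement reduces to a single inequality between two elementary edge counts, and the only slightly subtle point is to keep the $\min(4, k)$ cap, which is what prevents the argument from overcounting edges to $S$ when $k < 4$.
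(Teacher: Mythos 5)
Your proof is correct and rests on essentially the same idea as the paper's: a degree count using only 4-regularity and simplicity, since a vertex of $W_r$ has at most $n_r-1$ neighbours in $W_r$ and at most $k$ in $S$. The paper states this per vertex (forcing $4 \leq (n_r-1)+k$ directly), whereas you sum over the component and invoke $e_r \leq \binom{n_r}{2}$; after dividing by $n_r$ your inequality $(4-k)n_r \leq n_r(n_r-1)$ collapses to the same bound, so the two arguments differ only in presentation.
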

\begin{proof} Assume $n_{r} \leq 4 - k$ for some $r$. 
Since each vertex $v$ of $W_{r}$ has degree 4 and can be adjacent to at most $n_{r} - 1 \leq 3 - k$ vertices of $W_{r}$, $v$ must be adjacent to at least $k+1$ vertices in $S$, a contradiction. 
\end{proof}

\begin{prop}  \label{prop:2} For each $r = 1, \dots, t$ it is $|\partial(W_{r})| \geq m_{r}$ with $m_r := \max \{2, n_{r} (5 - n_{r})\}$. This in particular implies $\sum_{r=1}^{t} m_{r} \leq 4 k$.
\end{prop}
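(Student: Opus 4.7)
The proof naturally splits into establishing the two lower bounds for $|\partial(W_r)|$ separately, followed by a simple summation argument.

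For the bound $|\partial(W_r)| \geq n_r(5-n_r)$, the plan is a standard degree-counting argument. Summing the degrees of vertices in $W_r$ yields $4 n_r$, and this sum decomposes as $|\partial(W_r)| + 2 |E(W_r)|$, where $E(W_r)$ is the edge set of $G[W_r]$. Since $|E(W_r)| \leq \binom{n_r}{2} = n_r(n_r-1)/2$, rearranging gives $|\partial(W_r)| \geq 4 n_r - n_r(n_r-1) = n_r(5-n_r)$. (Note this bound is meaningful only for $n_r \leq 4$; for $n_r \geq 5$ the right-hand side is nonpositive and the inequality says nothing.)

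For the bound $|\partial(W_r)| \geq 2$, the plan is to invoke 2-connectivity. Since $G$ is connected, $|\partial(W_r)| \geq 1$. Suppose for contradiction that $|\partial(W_r)| = 1$, with the unique edge being $uv$, $u \in W_r$ and $v \in S$. The vertex $v$ is then the only neighbour in $V \setminus W_r$ of any vertex of $W_r$, so removing $v$ leaves $W_r$ with no edge to $V \setminus (W_r \cup \{v\})$. Provided $V \setminus (W_r \cup \{v\})$ is nonempty, this makes $v$ a cut vertex, contradicting 2-connectivity. The residual case $V \setminus (W_r \cup \{v\}) = \emptyset$ corresponds to $t = 1$ and $S = \{v\}$; but then all four edges incident to $v$ go to $W_r$, giving $|\partial(W_r)| = 4$, a contradiction. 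Thus $|\partial(W_r)| \geq 2$ in every case, and combining with the previous paragraph yields $|\partial(W_r)| \geq m_r$.

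The consequence $\sum_{r=1}^{t} m_r \leq 4k$ follows by summing. Since the sets $W_1, \dots, W_t$ partition $V \setminus S$ and there are no edges between different components of $G[V \setminus S]$, one has $\sum_{r=1}^{t} |\partial(W_r)| = |\partial(S)|$. Bounding $|\partial(S)|$ above by the total degree of $S$, namely $\sum_{v \in S} \deg(v) = 4k$, gives $\sum_{r=1}^{t} m_r \leq \sum_{r=1}^{t} |\partial(W_r)| \leq 4k$.

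There is no serious obstacle here: the argument is entirely elementary double-counting plus one case check. The only place that requires a bit of care is the 2-connectivity step, where one must separate out the degenerate configuration in which $V \setminus S$ is a single component and $|S| = 1$, to avoid vacuously removing a vertex whose removal leaves only one part.
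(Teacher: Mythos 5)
Your proposal is correct and follows essentially the same route as the paper: an elementary degree count (the paper phrases it as each vertex of $W_r$ having at least $5-n_r$ neighbours in $S$, equivalent to your edge-sum version), the bound $|\partial(W_r)|\geq 2$ from 2-connectivity (which the paper simply asserts and you prove in detail, including the degenerate case), and the identity $\sum_r |\partial(W_r)| = |\partial(S)| \leq 4k$.
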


\begin{proof} The 2-connectivity of $G$ implies $|\partial(W_{r})| \geq 2$. If $n_r\leq 4$, each vertex of $W_{r}$ must be adjacent to at least $5 - n_{r}$ vertices of $S$, so $|\partial(W_{r})| \geq m_{r}$.  Since $\sum_{r=1}^{t} |\partial(W_{r})| = |\partial(S)|  \leq 4k$ the second statement holds. 
\end{proof}

For each $n$ and $k$ we can compute an upper bound $v'(n, k)$ to the value $v(n, k)$ by solving the following ILP problem. Let $x_{i}$  be an integer variable representing the number of components of cardinality $i$ in the graph $G[V\setminus S]$ and  $m_i:=\max\{2,5i-i^2\}$. By Proposition \ref{prop:1} we can assume that $i$ goes from $s(k):=\max\{1,5-k\}$ to $n - k$. Let us consider the model $\mathcal{Q}_{n,k}$:

\begin{eqnarray}
v'(n,k) := \max\sum_{i=s(k)}^{n-k} x_i \\
 \label{vincolo1} \sum_{i=s(k)}^{n-k} i\;x_i &=& n-k \\
\label{vincolo2} \sum_{i=s(k)}^{n-k} m_i x_i &\leq& 4k  \\
 x_i&\in&\mathbb{N}\quad \quad \quad \forall~i = s(k), \dots, n-k.
\end{eqnarray}

The objective function counts the number of components of the graph $G[V\setminus S]$, the constraints (\ref{vincolo1}) state that the total number of vertices in these components must be $n-k$ and the constraints (\ref{vincolo2}) require that the property stated in Proposition \ref{prop:2} is satisfied. If there exists a 2-connected graph with $n$ vertices which is not 1-tough,  then it must be $v'(n, k) > k$ for some $k$. 

By Theorem \ref{thm:hilbig} every $2$-connected, $4$-regular graph with $n\leq 15$ is $1$-tough. So, in order to close the conjecture by Bauer, Broersma and Veldman,  we first focused on the cases $n=16$ and $n=17$.

By solving problem $\mathcal{Q}_{n,k}$ for $n = 16$, it turns out that $v'(16,k) > k$ only for $k=2$, in which case it is  $v'(16, 2) = 3$. 
 The optimal solution is $x^*_4=1$, $x^*_5=2$, $x^*_i=0$ for $i\not=4,5$.
 It is easy to verify that there is just one $\varphi$-graph compatible with this solution, namely the graph in Figure \ref{Figure:n16}. Note that if we remove the vertices in  $S = \{v_{1}, v_{2}\}$ we obtain a graph with 3 components, one with 4 vertices and two with 5 vertices. 
By solving again the problem  $\mathcal{Q}_{16,2}$ with the additional constraint $x_3\geq 1$ or  the problem  $\mathcal{Q}_{16,2}$ with the additional constraint $\sum_{i\geq 6} x_i\geq 1$, we obtain optimal value 2. This means that $x^{*}$ is the unique solution of $\mathcal{Q}_{16,2}$. 

 \begin{figure}[t]  
 \vspace{-0.5cm}
 	\centering
 	\begin{tikzpicture}[scale=0.40, transform shape]
 	\tikzset{cross/.style={cross out, draw=black, minimum size=2*(#1-\pgflinewidth), inner sep=0pt, outer sep=0pt},
 		cross/.default={1pt}}
 	
 	\draw[xshift=0.0] (1.618,1.17557) \foreach \x in {108,180,252,324} {
 		-- (\x:2)};
 	\foreach \x in {252,324,36,108,180}
 	\draw[fill] (\x:2) circle (2pt);
 	
 	\draw[xshift=0.0] (-2,0) \foreach \x in {324,108,252,36,180} {
 		-- (\x:2)};

 	\draw[xshift=8cm] (-1.618,-1.17557) \foreach \x in {288,360,72,144} {
 		-- (\x:2)};
 	[xshitf=8cm]\foreach \x in {72,144,216,288,360}
 	\draw[fill, xshift=8cm] (\x:2) circle (2pt);
 	
 	\draw[xshift=8cm] (2,0) \foreach \x in {144,288,72,216,360} {
 		-- (\x:2)};
 	
 	\draw[xshift=4cm] (1.24,1.24) \foreach \x in {45, 135,225,315,45} {
 		-- (\x:1.7557)};
 	[xshitf=4cm]\foreach \x in {45, 135,225,315}
 	\draw[fill, xshift=4cm] (\x:1.7557) circle (2pt);
 	
 	\draw[xshift=4cm] (1.24,1.24) -- (225:1.7557);
 	\draw[xshift=4cm] (1.24,-1.24) -- (135:1.7557);

 	\node[shape=star , star points=4, inner sep=0pt, draw, minimum size=6pt, xshift=4cm, thick] (v1) at (0,3) {};
 	\node[shape=circle, xshift=4cm] (1) at (0,3.4) {\LARGE$v_1$} ;
 	\draw[xshift=4cm] (0,3) -- (45:1.7557);
 	\draw[xshift=4cm] (0,3) -- (135:1.7557);
 	
 	\node[shape=star , star points=4, inner sep=0pt, draw, minimum size=6pt, xshift=4cm, thick] (v1) at (0,-3) {};
 	\node[shape=circle, xshift=4cm] (2) at (0,-3.4) {\LARGE$v_2$} ;
 	\draw[xshift=4cm] (0,-3) -- (-45:1.7557);
 	\draw[xshift=4cm] (0,-3) -- (-135:1.7557);
 	
 	\draw[xshift=4cm] (0,-3) -- (-4.0+1.618,-3.0+1.8);
 	\draw[xshift=4cm] (0,-3) -- (4.0-1.618,-3.0+1.8);
 	
 	\draw[xshift=4cm] (0,3) -- (-4.0+1.618,3.0-1.8);
 	\draw[xshift=4cm] (0,3) -- (4.0-1.618,3.0-1.8);
 	
 	\end{tikzpicture}
 \vspace{-0.2cm}
 	\caption{The unique 4-regular 2-connected graph with $n=16$ that is not $1-$tough.}\label{Figure:n16}
 \end{figure}

\begin{figure}[htbp]
  \centering
\begin{subfigure}[b]{0.4\linewidth}
\begin{tikzpicture}[scale=0.4, transform shape]

\draw[xshift=0.0] (1.618,1.17557) \foreach \x in {108,180,252,324} {
-- (\x:2)};
\foreach \x in {252,324,36,108,180}
\draw[fill] (\x:2) circle (2pt);

\draw[xshift=0.0] (-2,0) \foreach \x in {324,108,252,36,180} {
-- (\x:2)};

\draw[xshift=8cm] (-1.618,-1.17557) \foreach \x in {288,360,72,144} {
-- (\x:2)};
[xshitf=8cm]\foreach \x in {72,144,216,288,360}
\draw[fill, xshift=8cm] (\x:2) circle (2pt);

\draw[xshift=8cm] (2,0) \foreach \x in {144,288,72,216,360} {
-- (\x:2)};

\draw[xshift=4cm, yshift=4cm] (1.17557,-1.618) \foreach \x in {-54, 18, 90, 162, 234} {
-- (\x:2)};
[xshitf=4cm, yshift=4cm]\foreach \x in {-54, 18, 90, 162, 234}
\draw[fill, xshift=4cm, yshift=4cm] (\x:2) circle (2pt);

\draw[xshift=4cm, yshift=4cm] (1.17557,-1.618) \foreach \x in {90,234,18,162,-54} {
-- (\x:2)};

\node[shape=star , star points=4, inner sep=0pt, draw, minimum size=6pt, xshift=4cm, thick] (v1) at (45:1.7557) {};
\node[shape=star , star points=4, inner sep=0pt, draw, minimum size=6pt, xshift=4cm, thick] (v1) at (225:1.7557) {};
\draw[xshift=4cm] (1.24,1.24) -- (225:1.7557);

\draw[xshift=4cm] (1.24,1.24) -- (1.17557,4.0-1.628);
\draw[xshift=4cm] (1.24,1.24) -- (4.0-1.618,1.17557);
\draw[xshift=4cm] (1.24,1.24) -- (-4.0+1.618,1.17557);

\draw[xshift=4cm] (225:1.7557) -- (-4.0+1.618,-1.17557);
\draw[xshift=4cm] (225:1.7557) -- (4.0-1.618,-1.17557);
\draw[xshift=4cm] (225:1.7557) -- (-1.17557,4.0-1.628);

\node[shape=circle, xshift=4cm] (1) at (45:2.2) {\LARGE$v_1$} ;
\node[shape=circle, xshift=4cm] (2) at (225:2.2) {\LARGE$v_2$} ;

\end{tikzpicture}
\end{subfigure}
\hspace{0.5cm}
\begin{subfigure}[b]{0.3\linewidth}
\begin{tikzpicture}[scale=0.4, transform shape]

\draw[xshift=0.0] (1.618,1.17557) \foreach \x in {108,180,252,324} {
-- (\x:2)};
\foreach \x in {252,324,36,108,180}
\draw[fill] (\x:2) circle (2pt);

\draw[xshift=0.0] (-2,0) \foreach \x in {324,108,252,36,180} {
-- (\x:2)};

\draw[xshift=8cm] (-1.618,-1.17557) \foreach \x in {288,360,72,144} {
-- (\x:2)};
[xshitf=8cm]\foreach \x in {72,144,216,288,360}
\draw[fill, xshift=8cm] (\x:2) circle (2pt);

\draw[xshift=8cm] (2,0) \foreach \x in {144,288,72,216,360} {
-- (\x:2)};

\draw[xshift=4cm, yshift=4cm] (1.17557,-1.618) \foreach \x in {-54, 18, 90, 162, 234} {
-- (\x:2)};
[xshitf=4cm, yshift=4cm]\foreach \x in {-54, 18, 90, 162, 234}
\draw[fill, xshift=4cm, yshift=4cm] (\x:2) circle (2pt);
\draw[xshift=4cm, yshift=4cm] (1.17557,-1.618) \foreach \x in {234,90,-54} {
-- (\x:2)};
\draw[xshift=4cm, yshift=4cm] (1.17557,-1.618) (18:2)-- (162:2);

\node[shape=star , star points=4, inner sep=0pt, draw, minimum size=6pt, xshift=4cm, thick] (v1) at (45:1.7557) {};
\node[shape=star , star points=4, inner sep=0pt, draw, minimum size=6pt, xshift=4cm, thick] (v1) at (225:1.7557) {};

\draw[xshift=4cm] (1.17557,4.0-1.628) -- (1.24,1.24) -- (1.9021, 0.618+4);
\draw[xshift=4cm] (-1.17557,4.0-1.628) -- (225:1.7557) -- (-1.9021, 0.618+4);

\draw[xshift=4cm] (1.24,1.24) -- (4.0-1.618,1.17557);
\draw[xshift=4cm] (1.24,1.24) -- (-4.0+1.618,1.17557);

\draw[xshift=4cm] (225:1.7557) -- (-4.0+1.618,-1.17557);
\draw[xshift=4cm] (225:1.7557) -- (4.0-1.618,-1.17557);

\node[shape=circle, xshift=4cm] (1) at (43:2.2) {\LARGE$v_1$} ;
\node[shape=circle, xshift=4cm] (2) at (225:2.2) {\LARGE$v_2$} ;

\end{tikzpicture}
\end{subfigure}
\vspace{0.5cm}
\begin{subfigure}[b]{0.4\linewidth}
\begin{tikzpicture}[scale=0.4, transform shape]

\draw[xshift=0.0] (1.618,1.17557) \foreach \x in {108,180,252,324} {
-- (\x:2)};
\foreach \x in {252,324,36,108,180}
\draw[fill] (\x:2) circle (2pt);

\draw[xshift=0.0] (-2,0) \foreach \x in {324,108,252,36,180} {
-- (\x:2)};

\draw[xshift=8cm] (-1.618,-1.17557) \foreach \x in {288,360,72,144} {
-- (\x:2)};
[xshitf=8cm]\foreach \x in {72,144,216,288,360}
\draw[fill, xshift=8cm] (\x:2) circle (2pt);

\draw[xshift=8cm] (2,0) \foreach \x in {144,288,72,216,360} {
-- (\x:2)};

\draw[xshift=4cm, yshift=4cm] (1.17557,-1.618) \foreach \x in {-54, 90, 234, 18, 162, -54} {
-- (\x:2)};
[xshitf=4cm, yshift=4cm]\foreach \x in {-54, 18, 90, 162, 234}
\draw[fill, xshift=4cm, yshift=4cm] (\x:2) circle (2pt);

\draw[xshift=4cm, yshift=4cm] (1.17557,-1.618) (18:2)-- (90:2) -- (162:2);
\draw[xshift=4cm, yshift=4cm] (1.17557,-1.618) (-54:2)-- (234:2);

\node[shape=star , star points=4, inner sep=0pt, draw, minimum size=6pt, xshift=4cm, thick] (v1) at (45:1.7557) {};
\node[shape=star , star points=4, inner sep=0pt, draw, minimum size=6pt, xshift=4cm, thick] (v1) at (225:1.7557) {};

\draw[xshift=4cm] (1.17557,4.0-1.628) -- (1.24,1.24) -- (1.9021, 0.618+4);
\draw[xshift=4cm] (-1.17557,4.0-1.628) -- (225:1.7557) -- (-1.9021, 0.618+4);

\draw[xshift=4cm] (1.24,1.24) -- (4.0-1.618,1.17557);
\draw[xshift=4cm] (1.24,1.24) -- (-4.0+1.618,1.17557);

\draw[xshift=4cm] (225:1.7557) -- (-4.0+1.618,-1.17557);
\draw[xshift=4cm] (225:1.7557) -- (4.0-1.618,-1.17557);
\node[shape=circle, xshift=4cm] (1) at (42:2.2) {\LARGE$v_1$} ;
\node[shape=circle, xshift=4cm] (2) at (225:2.2) {\LARGE$v_2$} ;

\end{tikzpicture}
\end{subfigure}
\hspace{0.5cm}
\begin{subfigure}[b]{0.3\linewidth}
\begin{tikzpicture}[scale=0.4, transform shape]

\draw[xshift=0.0] (1.618,1.17557) \foreach \x in {108,180,252,324} {
-- (\x:2)};
\foreach \x in {252,324,36,108,180}
\draw[fill] (\x:2) circle (2pt);

\draw[xshift=0.0] (-2,0) \foreach \x in {324,108,252,36,180} {
-- (\x:2)};

\draw[xshift=8cm] (-1.618,-1.17557) \foreach \x in {288,360,72,144} {
-- (\x:2)};
[xshitf=8cm]\foreach \x in {72,144,216,288,360}
\draw[fill, xshift=8cm] (\x:2) circle (2pt);

\draw[xshift=8cm] (2,0) \foreach \x in {144,288,72,216,360} {
-- (\x:2)};

\draw[xshift=4cm, yshift=4cm] (-1.17557,1.618) \foreach \x in {-18, -162, 54, -90, -234, -18} {
-- (\x:2)};
\foreach \x in {-234, -162, -90, -18, 54}
\draw[fill, xshift=4cm, yshift=4cm] (\x:2) circle (2pt);

\draw[xshift=4cm, yshift=4cm] (-1.17557,1.618) \foreach \x in {-18, 54, -234, -162} {
-- (\x:2)};

\node[shape=star , star points=4, inner sep=0pt, draw, minimum size=6pt, xshift=4cm, thick] (v1) at (45:1.7557) {};
\node[shape=star , star points=4, inner sep=0pt, draw, minimum size=6pt, xshift=4cm, thick] (v1) at (225:1.7557) {};

\draw[xshift=4cm, yshift=4cm] (-90:2) -- (1.24,1.24-4) -- (-18:2);

\draw[xshift=4cm, yshift=4cm] (-90:2) -- (-1.24,-1.24-4) -- (-162:2);

\draw[xshift=4cm] (1.24,1.24) -- (4.0-1.618,1.17557);
\draw[xshift=4cm] (1.24,1.24) -- (-4.0+1.618,1.17557);

\draw[xshift=4cm] (225:1.7557) -- (-4.0+1.618,-1.17557);
\draw[xshift=4cm] (225:1.7557) -- (4.0-1.618,-1.17557);

\node[shape=circle, xshift=4cm] (1) at (41:2.2) {\LARGE$v_1$} ;
\node[shape=circle, xshift=4cm] (2) at (225:2.2) {\LARGE$v_2$} ;
\end{tikzpicture}
\end{subfigure}
 	\caption{Four 4-regular 2-connected graphs with $n=17$ that are not $1-$tough.}
 	\label{fig:T17_2_R5}
 \end{figure}
\begin{figure}[htbp]
  \vspace{-0.5cm}
\centering
\begin{subfigure}[b]{0.35\linewidth}
\begin{tikzpicture}[scale=0.4, transform shape]

\draw[xshift=0.0] (1.618,1.17557) \foreach \x in {108,180,252,324} {
-- (\x:2)};
\foreach \x in {252,324,36,108,180}
\draw[fill] (\x:2) circle (2pt);

\draw[xshift=0.0] (-2,0) \foreach \x in {324,108,252,36,180} {
-- (\x:2)};

\draw[xshift=8cm] (-1.732,-1.0) \foreach \x in {270,330,30,90,150} {
-- (\x:2)};
[xshitf=8cm]\foreach \x in {270,330,30,90,150,210}
\draw[fill, xshift=8cm] (\x:2) circle (2pt);

\draw[xshift=8cm] (-1.732,-1.0) \foreach \x in {90,330,150,270,30,210} {
-- (\x:2)};

\draw[xshift=4cm] (1.24,1.24) \foreach \x in {45, 135,225,315,45} {
-- (\x:1.7557)};
[xshitf=4cm]\foreach \x in {45, 135,225,315}
\draw[fill, xshift=4cm] (\x:1.7557) circle (2pt);

\draw[xshift=4cm] (1.24,1.24) -- (225:1.7557);
\draw[xshift=4cm] (1.24,-1.24) -- (135:1.7557);

\node[shape=star , star points=4, inner sep=0pt, draw, minimum size=6pt, xshift=4cm, thick] (v1) at (0,3) {};
\node[shape=circle, xshift=4cm] (1) at (0,3.4) {\LARGE$v_1$} ;
\draw[xshift=4cm] (0,3) -- (45:1.7557);
\draw[xshift=4cm] (0,3) -- (135:1.7557);

\node[shape=star , star points=4, inner sep=0pt, draw, minimum size=6pt, xshift=4cm, thick] (v1) at (0,-3) {};
\node[shape=circle, xshift=4cm] (2) at (0,-3.4) {\LARGE$v_2$} ;
\draw[xshift=4cm] (0,-3) -- (-45:1.7557);
\draw[xshift=4cm] (0,-3) -- (-135:1.7557);

\draw[xshift=4cm] (0,-3) -- (-4.0+1.618,-3.0+1.8);
\draw[xshift=4cm] (0,-3) -- (4.0-1.732,-1.0);

\draw[xshift=4cm] (0,3) -- (-4.0+1.618,3.0-1.8);
\draw[xshift=4cm] (0,3) -- (4.0-1.732,1.0);

\end{tikzpicture}
\end{subfigure}
\hspace{0.5cm}
\begin{subfigure}[b]{0.32\linewidth}
\begin{tikzpicture}[scale=0.4, transform shape]

\draw[xshift=0.0] (1.618,1.17557) \foreach \x in {108,180,252,324} {
-- (\x:2)};
\foreach \x in {252,324,36,108,180}
\draw[fill] (\x:2) circle (2pt);

\draw[xshift=0.0] (-2,0) \foreach \x in {324,108,252,36,180} {
-- (\x:2)};

\draw[xshift=8cm] (-1.732,-1.0) \foreach \x in {270,330,30,90,150} {
-- (\x:2)};
[xshitf=8cm]\foreach \x in {270,330,30,90,150,210}
\draw[fill, xshift=8cm] (\x:2) circle (2pt);

\draw[xshift=8cm] (-1.732,1.0) \foreach \x in {30,270,90,330,210,150} {
-- (\x:2)};

\draw[xshift=4cm] (1.24,1.24) \foreach \x in {45, 135,225,315,45} {
-- (\x:1.7557)};
[xshitf=4cm]\foreach \x in {45, 135,225,315}
\draw[fill, xshift=4cm] (\x:1.7557) circle (2pt);

\draw[xshift=4cm] (1.24,1.24) -- (225:1.7557);
\draw[xshift=4cm] (1.24,-1.24) -- (135:1.7557);

\node[shape=star , star points=4, inner sep=0pt, draw, minimum size=6pt, xshift=4cm, thick] (v1) at (0,3) {};
\node[shape=circle, xshift=4cm] (1) at (0,3.4) {\LARGE$v_1$} ;
\draw[xshift=4cm] (0,3) -- (45:1.7557);
\draw[xshift=4cm] (0,3) -- (135:1.7557);

\node[shape=star , star points=4, inner sep=0pt, draw, minimum size=6pt, xshift=4cm, thick] (v1) at (0,-3) {};
\node[shape=circle, xshift=4cm] (2) at (0,-3.4) {\LARGE$v_2$} ;
\draw[xshift=4cm] (0,-3) -- (-45:1.7557);
\draw[xshift=4cm] (0,-3) -- (-135:1.7557);

\draw[xshift=4cm] (0,-3) -- (-4.0+1.618,-3.0+1.8);
\draw[xshift=4cm] (0,-3) -- (4.0-1.732,-1.0);

\draw[xshift=4cm] (0,3) -- (-4.0+1.618,3.0-1.8);
\draw[xshift=4cm] (0,3) -- (4.0-1.732,1.0);

\end{tikzpicture}
\end{subfigure}
 	\caption{Two 4-regular 2-connected graphs with $n=17$ that are not $1-$tough.}
	\label{fig:T17_2_G6}
 \end{figure}
\begin{figure}[htbp]
\centering
\begin{minipage}[t]{0.45\linewidth}
\centering
\begin{tikzpicture}[scale=0.4, transform shape]

\draw[xshift=0.0] (1.618,1.17557) \foreach \x in {108,180,252,324} {
-- (\x:2)};
\foreach \x in {252,324,36,108,180}
\draw[fill] (\x:2) circle (2pt);

\draw[xshift=0.0] (-2,0) \foreach \x in {324,108,252,36,180} {
-- (\x:2)};

\draw[xshift=8cm] (-1.618,-1.17557) \foreach \x in {288,360,72,144} {
-- (\x:2)};
[xshitf=8cm]\foreach \x in {72,144,216,288,360}
\draw[fill, xshift=8cm] (\x:2) circle (2pt);

\draw[xshift=8cm] (2,0) \foreach \x in {144,288,72,216,360} {
-- (\x:2)};

[xshitf=4cm]\foreach \x in {45, 135,225,315}
\node[shape=star , star points=4, inner sep=0pt, draw, minimum size=6pt, xshift=4cm, thick] (v1) at (\x:1.7557) {};

\node[shape=circle, xshift=4cm] (1) at (45:2.2) {\LARGE$v_1$};
\node[shape=circle, xshift=4cm] (1) at (135:2.2) {\LARGE$v_2$};
\node[shape=circle, xshift=4cm] (1) at (225:2.2) {\LARGE$v_3$};
\node[shape=circle, xshift=4cm] (1) at (315:2.2) {\LARGE$v_4$};

\draw[fill, xshift=4cm] (0,2) circle (2pt);
\draw[fill, xshift=4cm] (0,0) circle (2pt);
\draw[fill, xshift=4cm] (0,-2) circle (2pt);

[xshitf=4cm]\foreach \x in {45, 135,225,315}
\draw[xshift=4cm] (0,2) -- (\x:1.7557);

[xshitf=4cm]\foreach \x in {45, 135,225,315}
\draw[xshift=4cm] (0,-2) -- (\x:1.7557);

[xshitf=4cm]\foreach \x in {45, 135,225,315}
\draw[xshift=4cm] (0,0) -- (\x:1.7557);

\draw[xshift=4cm] (225:1.7557) -- (-4.0+1.618,-3.0+1.8);
\draw[xshift=4cm] (315:1.7557) -- (4.0-1.618,-3.0+1.8);

\draw[xshift=4cm] (135:1.7557) -- (-4.0+1.618,3.0-1.8);
\draw[xshift=4cm] (45:1.7557) -- (4.0-1.618,3.0-1.8);

\end{tikzpicture}
\caption{A 4-regular 2-connected graph with $n=17$  that is not $1-$tough.}\label{fig:T17_4}
\end{minipage}
\hspace{-1cm}
\begin{minipage}[t]{0.45\linewidth}
\centering
\begin{tikzpicture}[scale=0.4, transform shape]
\tikzset{cross/.style={cross out, draw=black, minimum size=2*(#1-\pgflinewidth), inner sep=0pt, outer sep=0pt},
cross/.default={1pt}}

\draw[xshift=0.0] (1.618,1.17557) \foreach \x in {108,180,252,324} {
-- (\x:2)};
\foreach \x in {252,324,36,108,180}
\draw[fill] (\x:2) circle (2pt);

\draw[xshift=0.0] (-2,0) \foreach \x in {324,108,252,36,180} {
-- (\x:2)};
\end{tikzpicture}
\caption{  The graph $R_5$.}
\label{fig:R5}
\end{minipage}
\end{figure}

A similar analysis for $n=17$ allows to identify seven $\varphi$-graphs with $n=17$. Indeed, when solving problem $\mathcal{Q}_{n,k}$ for $n = 17$, it turns out that $v'(17,k) > k$  for $k=2, 4$  with optimal values, respectively, $v'(17,2) = 3$ and $v'(17,4) = 5$. In particular, one optimal solution of problem $\mathcal{Q}_{17,2}$ is $x^*_5 = 3$ and $x^*_i=0$ for $i\not= 5$. This solution determines the four graphs in Figure \ref{fig:T17_2_R5}. By solving again $\mathcal{Q}_{17,2}$ with the additional constraint $x_3+x_4\geq 1$ one obtains a different solution $\bar x$ of value 3 with $\bar{x}_4=\bar{x}_5=\bar{x}_6=1$ and $\bar{x}_i=0$ for $i\not=4,5,6$. This solution is compatible only with the two graphs  in Figure \ref{fig:T17_2_G6}. The solutions $x^*$ and $\bar{x}$ are the only solutions of $\mathcal{Q}_{17,2}$ of value 3. Indeed, by adding to $\mathcal{Q}_{17,2}$ either the constraint $x_3\geq 1$ or the constraint  $\sum_{i\geq 7} x_i \geq 1$ one obtains 2 as optimal value. Finally, the optimal solution found when solving $\mathcal{Q}_{17,4}$ is $\hat x$ with   $\hat x_1=3$, $\hat x_5=2$ and $\hat x_i=0$, $i\not=1,5$. It has value $5$ and corresponds to the graph in Figure \ref{fig:T17_4}. By solving again $\mathcal{Q}_{17,4}$  with the  additional constraint either $x_1\leq 2$ or $x_1 \geq 4$ or   $x_5\leq1$, one always obtains an optimal value at  most $4$. So $\hat x$ is the unique optimal solution of problem $\mathcal{Q}_{17,4}$.

We observe that all the eight graphs determined through the preliminary analysis contain one or more subgraphs isomorphic to the graph  in Figure \ref{fig:R5} which is obtained by removing one edge from the complete graph $K_5$. We call this graph $R_{5}$. This implies that any 2-connected 4-regular graph with $n = 16$ or $n = 17$ that does not contain any $R_5$ is 1-tough. This fact will play a role in reducing the computations needed to prove that $f(1t, \lnot H) = 18$. 

The preliminary analysis has been performed also for  $18 \leq n \leq 20$.  For all these cases, we found that by removing $k$ vertices the graph gets disconnected in at most $k+1$ components. Defining $K(G) := \{k : v'(n,k) > k\}$ and denoting by $r(G)$
the number of disjoint subgraphs of $G$ isomorphic to $R_{5}$, the results obtained for $16 \leq n \leq 20$ can be summarized as follows
\begin{itemize}
\item if $n = 16$ then $r(G) = 2 \wedge K(G) = \{2\}$;
\item if $n = 17$ then $(r(G) = 1 \wedge K(G) = \{2\}) \vee (r(G) = 2 \wedge K(G) \subseteq \{2,4\})$;
\item if $n = 18$ then $(r(G) = 1 \wedge K(G) \subseteq \{2, 4\}) \vee (r(G) \in \{0, 2\} \wedge K(G) = \{2\})$;

\item if $n = 19$ then  
$(r(G) \in \{0, 1\} \wedge K(G) \subseteq \{2, 4\}) \vee (r(G) = 2  \wedge K(G) \subseteq \{2,5\})$;
\item if $n = 20$ then  $$(r(G) \in \{0, 2\} \wedge K(G) \subseteq \{2, 4\}) \vee (r(G) = 1  \wedge K(G) \subseteq \{2, 4, 5\}) \vee (r(G) = 3  \wedge K(G) = \{3\}).$$
\end{itemize}

%

\section{The ILP model for solving problem $P(n,\cC,\lnot \cH)$} 
\label{sec:model}
Let us consider the problems $P(n,\cC, \lnot \cH)$ defined in the introduction where 
\begin{align*}
\mathcal{C} \in& ~\{\text{connectivity},\text{2-connectivity}, 1\text{-toughness}\}, \\
\mathcal{H} \in& ~\{\text{hamiltonicity, homogeneous traceability, traceability}\}.
\end{align*}
We formulate any problem $P(n,\cC,\lnot \cH)$ as an ILP feasibility model $\genericmodel$ whose feasible solutions correspond to those 4-regular graphs with $n$ vertices that satisfy property $\cC$ and do not satisfy property $\cH$. As a consequence, the value  $f(\cC, \lnot \cH)$ corresponds to the minimum  $n$ for which model $\genericmodel$ has a feasible solution.
More in detail, let $K^{n} = (V^{n}, E^{n})$ denote the complete graph with  $V^{n} = \{1, \dots, n\}$. We introduce a binary variable $x_{e}$ for each edge $e \in E^{n}$ and 
associate to any $x \in \{0,1\}^{|E^{n}|}$ the graph $G(x) = (V^{n}, E(x))$ with edge-set $E(x) = \{e \in E^{n}: x_{e} = 1\}$. The linear inequalities of the model are the following. The condition that $G(x)$ is 4-regular is imposed by the family of $n$ constraints, called {\em degree constraints},
\begin{equation}
\sum_{e\in\partial(\{i\})} x_e = 4 \qquad \qquad  \forall~i \in V^{n}\label{eq:4R}
\end{equation}
Let $\cH^{n}$ and $\cP_{\ell}(i)$, $i \in V^{n}$, denote the set of the Hamilton cycles and, respectively, the set of the  paths of length $\ell$ starting at $i$ in $K^{n}$. Hence $\cP_{n-1}(i)$ denotes the set of the Hamilton paths starting at vertex $i$. The condition that the graph $G(x)$ is not hamiltonian can be imposed by  the family of constraints
\begin{equation} 
\sum_{e\in H } x_e \leq |H|-1 = n-1 \qquad  \forall~H \in\mathcal{H}^{n} \label{eq:NH}.
\end{equation}
Similarly, the condition that $G(x)$ is not  is not traceable can be imposed by the family of constraints
\begin{equation} 
\sum_{e\in P } x_e \leq |P|-1 = n- 2 \qquad \forall~i \in V^{n}, ~P \in \cP_{n-1}(i) \label{eq:NoPli}.
\end{equation}
The condition that $G(x)$ is not homogeneously traceable may be imposed by requiring that for a particular vertex, let us say vertex 1, 
\begin{equation} 
\sum_{e\in P } x_e \leq |P|-1 = n-2  \qquad \forall~P \in \cP_{n-1}(1) \label{eq:NoPl}.
\end{equation}
Finally, let us consider how to model each connectivity condition  $\cC$.
The family of contraints
\begin{equation} 
\sum_{e\in \partial(S) } x_e \geq 1 \qquad\qquad \forall~S \subset V^{n},\ S \neq \emptyset \label{eq:conn}
\end{equation}
guarantees that the graph $G(x)$ is connected while the family of contraints 
\begin{equation}
\sum_{e\in \partial(S)\setminus \partial(\{i\})} x_e \geq 1 \qquad \forall\ i\in V^{n}, ~S\subset V^{n}\setminus\{i\},\ S \neq \emptyset \label{eq:2C}
\end{equation}
guarantees that any graph obtained from $G(x)$ by removing any vertex is connected, i.e.,  that $G(x)$ is 2-connected. Finally, the 1-toughness of $G(x)$ may be imposed by introducing the family of constraints
\begin{equation} 
\sum_{1\le a < b \le t}\,\,\sum_{e\in\partial(W_a)\,\cap\,\partial(W_b)} x_e \geq 1 \quad \forall \text{ partition } S, W_1, \dots, W_t \text{ of } V^{n} \text{ with } t > |S|. \label{eq:1T}
\end{equation}
Each model $\genericmodel$ is defined by the degree contraints and the families of constraints that impose condition $\cC$ and forbid property $\cH$. 
We observe that all the previous families,  except that of the degree constraints, contain a number of inequalities which is exponential with respect to $n$. Since this number is very high even for $n=16$, we have adopted a cutting plane approach to generate and add these constraints to the model only when needed. This requires to be able to  solve the  {\em separation problem} corresponding to each family of constraints.

\subsection{The separation problems} 
\label{subsec:separation}
The separation problem with respect to a family $\cal L$ of inequalities is the following: given  a solution $\bar x$ (not necessarily integer), find  an inequality of $\cal L$ violated by $\bar x$ or determine that such inequality does not exist. An algorithm for this problem is called a separation algorithm for $\cal L$.

\subsubsection*{The separation problem for the $\lnot \cH$-constraints}
\label{subsubsec:-H}
A solution $\bar x$ violates the non-hamiltonicity constraints (\ref{eq:NH})  if and only if there exists a Hamilton cycle $H \in \mathcal{H}^{n}$ such that 
\begin{equation*}
\sum_{e \in H } \bar x_{e} > n-1 \Leftrightarrow  \sum_{e \in H } (\bar x_{e} - 1) > -1 \Leftrightarrow  \sum_{e \in H } (1 - \bar x_{e}) < 1.
\end{equation*}
As a consequence, the separation problem for constraints (\ref{eq:NH}) can be solved by finding the shortest Hamilton cycle in $K^{n}$ with respect to the lengths $c_{e} := 1 - \bar x_{e}$ for each $e \in E^{n}$. This is  a  Traveling Salesman Problem (TSP). If the optimal TSP solution $H^*$ has value smaller than 1 then the constraint
\begin{equation*}
\sum_{e \in H^*} x_{e} \leq n - 1
\end{equation*}
is violated by $\bar x$ and the constraint is added to the model. Otherwise, $\bar x$ satisfies all constraints (\ref{eq:NH}).
Similarly, the constraints \ref{eq:NoPli} (respectively, \ref{eq:NoPl}) are satisfied if and only if the shortest path with respect to the lengths $c_{e}$ in ${\cal P}_{n-1}$ (respectively, in  ${\cal P}_{n-1}(i)$)  has length strictly less than 1.    
It is well known that the TSP and the problem of finding the shortest path with a given number of edges are NP-hard. However, in our application $n$ is fixed and rather small. Solving these problems  on such small graphs is quite simple and there are several effective algorithms to this end. In particular, we have used a simple branch-and-bound procedure.

\subsubsection*{The separation problem for the connectivity and 2-connectivity constraints}
A solution $\bar x$ does not satisfy the 2-connectivity constraints (\ref{eq:2C}) if and only for some vertex $i \in V^{n}$ and some subset $\bar S \subset V^{n} \setminus \{i\}$, $\bar S \neq \emptyset$,  the sum $\sum_{e \in \partial(\bar S)}\bar x_{e}$ over the cut $\partial(\bar S)$ in $K^{n}[V^{n}\setminus\{i\}]$ is strictly smaller than 1. Thus the separation problem for the 2-connectivity constraints may be solved by finding,  for each $i \in V^{n}$,
a minimum-cut on the graph $K^{n}[V^{n}\setminus\{i\}]$ with respect to the weights $w_{e} :=\bar x_{e}$. 
If for some $i$ the optimal value is smaller than 1, the 2-connectivity inequality defined by $i$ and the optimal solution $\bar S$ is violated by $\bar x$, otherwise $\bar x$ satisfies all inequalities (\ref{eq:2C}).  If $\bar x$ is integer, the separation problem can be alternatively solved in time $O(n)$ by searching for the articulation points of the graph $G(\bar x)$, i.e., the vertices whose removal disconnects the graph \cite{S98}. Similar approaches  may be used to separate the connectivity constraints (\ref{eq:conn}) for fractional and integer solutions.  

\subsubsection*{The separation problem for the 1-toughness constraints}
The separation problem with respect to the 1-toughness constraints (\ref{eq:1T}) is NP-hard. We separate these constraints only for binary solutions $\bar x$ that determine a  4-regular and 2-connected graph $G(\bar x)$.  
For given $k$ and $t$, with $k < t$, we look for a partition $S, W_1, \dots, W_t$ of $V^{n}$ such that $|S| = k$ and $W_{1}, \dots, W_{t}$ are the vertex sets of the connected components of the graph 
$G(\bar x)[V^{n}\setminus S]$  by solving the following ILP problem. Let  $z_{i}$, $i \in V^{n}$, and $y_{ir}$, $i \in V^{n}$ and $r = 1, \dots, t$, be binary variables such that
\begin{equation*}
    z_i = \begin{cases} 1 & \text{if~} i\in S \\ 0 & \text{otherwise} \end{cases} \qquad \qquad \qquad
    y_{ir} = \begin{cases} 1 &  \text{if~} i\in W_r \\ 0 & \text{otherwise} \end{cases}
\end{equation*}
Then the following constraints are satisfied only by partitions with the required properties
\begin{eqnarray}
\label{eq2}         \mathcal{T}_{k,t}: ~~~~~~	~~~~~~	\sum_{r=1}^{t} y_{ir} + z_{i}&=& 1 \quad\quad\quad\quad \forall~i\in V^{n} \\
\label{eq3} 		y_{ir} &\leq& y_{jr}+z_j \quad\quad\forall~\{i,j\}\in E(\bar x),~~ r=1,\dots,t\\
\label{eq4} 		y_{jr} &\leq& y_{ir}+z_i \quad\quad\forall \{i,j\}\in E(\bar x),~~ r=1,\dots,t \\
\label{eq5} 		\sum_{i \in V^{n}} y_{ir} &\geq&  1 \quad\quad\quad \forall~r=1,\dots, t\\
\label{eq1} 		\sum_{i\in V^{n}} z_i &=& k \\
\label{eq9} 		z_i &\in& \{0,1\} \quad \forall~i\in V^{n} \\
\label{eq10} 		y_{ir} &\in& \{0,1\} \quad \forall~i\in V^{n}, r=1.,\dots,t.\\
\end{eqnarray}
Conditions (\ref{eq2}) impose that each vertex belongs to exactly one set of the partition. Conditions (\ref{eq3}) and (\ref{eq4}) guarantee that each edge $e \in E(\bar x)$ has either both endpoints in a same set $W_{r}$ or at least one endpoint in $S$.  Finally, all the sets  $W_{r}$ are not empty  by  constraints (\ref{eq5}) and $|S| = k$ by constraint (\ref{eq1}).

\subsection{The branch-and-cut procedure}
\label{sec:bac}

As already remarked, each model ${\cal M}(n, \cC, \lnot \cH)$  has exponential size with respect to $n$ and must therefore be solved with a constraint-generation approach.
The standard way to do  this is called {\em branch-and-cut}. Branch-and-Cut is a version of branch and bound in which the constraint matrix at each node $N$ of the search tree contains only a (small) subset of the constraints of the original model, while some of the missing constraints may be added at run time.  Let us denote by  $\mathcal{M}(N)$ the set of constraints of the subproblem corresponding to  node $N$. These are the constraints that were input at the root node, plus the branching constraints (fixing variables to 0 or 1) and  all  the constraints which were added in the nodes on the path from the root to $N$.

Whenever the LP-relaxation of $\mathcal{M}(N)$ is solved, yielding a solution $\bar x$, the feasibility of $\bar x$ with respect to $\genericmodel$  must be checked. 
The solution $\bar x$ could be infeasible either because it is fractional, or because it violates some of the constraints of $\genericmodel$ which are missing at $N$. In order to find which constraints, if any, are not satisfied by $\bar x$,  we  first run the separation algorithm described in Section \ref{subsec:separation} to possibly find
one of the constraints enforcing $\lnot \cal H$ which is violated. If this is not the case and $\bar x$ is integer, we also run the separation algorithms  which 
check if one of the constraints enforcing property $\cal C$ is violated. The properties of connectivity, 2-connectivity and 1-toughness are checked in this order.
If we find any violated constraints,  we add them  to $\mathcal{M}(N)$ and solve the problem again. This phase is called constraint- (or cut-) generation.

The processing of the node terminates only when $\bar x$ is integer and feasible  for $\genericmodel$, or when $\bar x$ is fractional
but satisfies all constraints imposing
 property $\lnot {\cal H}$. 
If $\bar x$ is feasible,  it induces a graph $G(\bar x)$ with the sought properties and the search is terminated with a positive answer to problem
${\cal P}(n, \cC, \lnot \cH)$.
Otherwise,  a branching is performed from $N$, by picking a fractional component $\bar x_j$ and creating two new subproblems, $N'$ in which we fix  $x_j=0$, and $N''$ in which we fix $x_j=1$.

\subsection{Implementation decisions}\label{sec:decision}
To conclude the description of the ILP model, we briefly describe three implementation decisions that we have taken in order to speed-up the search. 

\subsubsection{Symmetries and orbital branching}

Let us consider the generic ILP model $\genericmodel$, a solution $x \in \{0,1\}^{|E^{n}|}$ and the associated graph $G(x)$. For every permutation $\pi\in S_n$ we can define a new solution $\pi(x)$ by setting $\pi(x)_{ij} = x_{\pi(i)\pi(j)}$ for each $\{i,j\} \in E^{n}$. The graph $G(\pi(x))$ is clearly isomorphic to $G(x)$. Since 
 all the connectivity properties and the  hamiltonian properties that we are considering are preserved by graph isomorphisms, $x$ is feasible for $\genericmodel$ if and only if  $\pi(x)$ is. This implies that every permutation $\pi\in S_n$ induces a symmetry of the model, i.e., the model has many different, but in fact isomorphic, solutions. It is well known that even relatively small instances of ILP problems with large groups of symmetries can be extremely difficult to solve via branch and cut. For this reason several techniques have been proposed in the literature to reduce the impact of symmetries (see for instance the surveys of Margot \cite{M10} and Pfetsch and Rehn \cite{PR19}). Among these techniques, a very effective one is Orbital Branching by Ostrovski and al. \cite{OLRS11}.
 
 The orbital branching method  requires to compute at each node $N$ of the branch and bound tree the group ${\cal G}^{N}$ of the permutations of $S_{n}$ that stabilizes the sets $B_{0}(N)$ and $B_{1}(N)$ of the indices of the variables that have been fixed at 0 and 1 at $N$ (because of branching or some other reason). The orbit of an edge $\bar e$ under the action of ${\cal G}^{N}$ is the set  $O(\bar e) = \{\pi(\bar e) : \pi \in {\cal G}^{N}\}$.  The main idea of orbital branching is that, given a free variable $x_{\bar e}$, we can create two new nodes in the branch and bound tree based on the disjunction $(x_{\bar e} = 1)\vee (\sum_{e\in O(\bar e)} x_{e} =0 )$. 
The orbital branching effectiveness can be strengthened by using a fixing technique introduced in \cite{OLRS11}. In orbital branching we ensure that any two nodes are not equivalent with respect to the symmetries found at their first common ancestor. It is possible, however, that two child subproblems are equivalent with respect to a symmetry group found elsewhere in the tree. In order to overcome this situation, orbital fixing  works as follows.
Let $I_0$ and  $I_1$ be the index sets of variables  fixed to 0 and, respectively, to 1 at the root node.  Note that $I_0\subseteq B_0(N)$ and $I_1\subseteq B_1(N)$ for each $N$. Given the group ${\cal G}(B_1(N),I_0)$ of the permutations in $S_{n}$ that stabilize the sets $B_1(N)$ and $I_0$, let $ O(e) $ denote the orbit of the edge $e$ under the action of  ${\cal G}(B_1(N),I_0)$.  Consider the set  $F_0 = \cup_{e \in B_{0}(N)}\; O(e)$ containing all the edges belonging to the orbits of  edges in $B_{0}(N)$. The results concerning orbital branching and orbital fixing guarantee that, given a free variable $x_{\bar e}\not\in F_{0}\cup B_{1}(N)$,  
two new nodes may be created according to the disjunction  
 \begin{equation*}
      \Big(x_{\bar e} =1 \;\wedge\; \sum_{e \in F_{0}} x_{e} = 0 \Big)\vee \Big(\sum_{e\in O(\bar e)} x_{e} =0 \;\wedge\; \sum_{e \in F_{0}} x_{e} = 0 \Big).
 \end{equation*}
 
 Clearly, the additional computational effort required  by the method to compute the groups of symmetries ${\cal G}^{N}$ and ${\cal G}(B_1(N),I_0)$ and their orbits is worthwhile as long as it returns orbits of rather large size, in which case the orbital branching and the orbital fixing rule significantly limit the visit of isomorphic solutions. Since the branching constraints tend to reduce the symmetries of the problem, orbital branching is usually performed only at the first levels of the branch and bound tree.

\subsubsection{Decomposition strategies for the solutions space} 
\label{subsec:decomposition}

For every model $\genericmodel$ we have adopted a decomposition of the set of feasible solutions based on the following idea. Given $k \leq  n$, we define the problem $Q(n, \cC, \lnot P_{k})$ as the problem of finding a graph satisfying property $\cC$ which (1) contains a  path of length $k-1$ and (2) does not contain a  path of length $k$ (a Hamilton  cycle if $k=n$). When $\cH = HT$ we require that these paths start at vertex 1.  Clearly, problem $P(n, \cC, \lnot \cH)$ is infeasible if and only if problem $Q(n, \cC, \lnot P_{k})$ is infeasible for every $k \leq r$, where $r = n$ if $\cH = H$ and $r = n-1$ if $\cH = T, HT$.  Since each connected 4-regular graph with $n \geq 15$ vertices contains at least one path of length 8, we  can start from the value $k = 8$. Condition (1) is imposed  by  fixing to 1 in the initial model the edges of a  path of length $k-1$ in $K^{n}$ (for instance the edges$\{h, h+1\}$ for $h=1, \dots, k-1$). Condition (2) can be guaranteed by imposing that the constraints (\ref{eq:NoPli}) are satisfied for each path in ${\cal P}_{k}(i)$ (in  ${\cal P}_{k}(1)$ if $\cH = HT$) instead than each path in ${\cal P}_{n-1}(i)$. The separation routine for the TSP-constraints has been easily adjusted to separate the modified constraints.  The use of this decomposition strategy allowed us to reduce by more than one order of magnitude the overall computational time required to solve model $\model{n}{1t}{H}$ with respect to the straightforward solution reported in \cite{LPR20}.

A second type of decomposition was adopted for the model $\model{n}{1t}{H}$ with $16 \leq n \leq 18$. A main concern in solving this model is that the separation routine for the 1-toughness constraints (\ref{eq:1T}) takes a considerable time.
In order to overcome this drawback,  by exploiting the preliminary analysis of Section \ref{sec:preliminary}, we have identified a small number of cases in which we actually do have to impose these constraints and, for these cases, which problems $\mathcal{T}_{k,t}$   have to be solved to separate the 1-toughness inequalities. In the remaining cases we can relax the constraints (\ref{eq:1T}) since they are implied by the 2-connectivity conditions. Let us now briefly describe this decomposition scheme. As it follows from the results of  Section \ref{sec:preliminary}, every 4-regular 2-connected graph with 16 and 17 vertices which is not 1-tough contains at least one subgraph isomorphic to $R_{5}$. For $n=18, 19, 20$ the graph may contain no $R_{5}$. Let $\mathcal{F}_{n}$ be the set of solutions of $\model{n}{1t}{H}$. We partition $\mathcal{F}_{n}$ into three sets, namely $\mathcal{F}_{n}(2R5)$, $\mathcal{F}_{n}(1R5)$ and  $\mathcal{F}_{n}(NOR5)$, which contain  the solutions corresponding to graphs with, respectively, at least two (disjoint) copies, a single copy or no copy of $R_{5}$. 

This partitioning allows us to fix many variables in the models in the first two cases. In particular, since the solutions in  $\mathcal{F}_{n}(2R5)$  contain two disjoint  copies of $R_{5}$, we can fix to 1 the variables corresponding to the edges shown in Fig. \ref{fig:fix12}(a). Similarly, for  the solutions in  $\mathcal{F}_{n}(1R5)$  we can fix to 1 the variables corresponding to the edges in Fig. \ref{fig:fix12}(b). Moreover, in the last two cases, we have to add to the model a set of inequalities which forbid the presence of any $R_5$ other than the one possibly fixed as above. These inequalities, called
{\em noR5-constraints (NOR5)}, are 
\begin{equation} \label{eq:NOR5}
\sum_{e\in E(V')} x_e \leq 8 \qquad \forall\ V'\subseteq W,~~|V'|=5,
\end{equation}
where  
$W=\{6,\ldots,n\}$ in the case of $\mathcal{F}_{n}(1R5)$,  
and $W=V^{n}$ in the case of $\mathcal{F}_{n}(NOR5)$. Since there are several hundred inequalities, we have decided not to add them all to the model, but to separate them only when needed. 
\begin{figure}[t]
\centering
\begin{subfigure}[b]{0.45\linewidth}
\centering
\begin{tikzpicture}[scale=0.40, transform shape]
\tikzset{cross/.style={cross out, draw=black, minimum size=2*(#1-\pgflinewidth), inner sep=0pt, outer sep=0pt},
cross/.default={1pt}}

\draw[xshift=0.0] (1.618,1.17557) \foreach \x in {108,180,252,324} {
-- (\x:2)};
\foreach \x in {252,324,36,108,180}
\draw[fill] (\x:2) circle (2pt);

\draw[xshift=0.0] (-2,0) \foreach \x in {324,108,252,36,180} {
-- (\x:2)};

\draw[xshift=4.5cm] (-1.618,-1.17557) \foreach \x in {288,360,72,144} {
-- (\x:2)};
[xshitf=8cm]\foreach \x in {72,144,216,288,360}
\draw[fill, xshift=4.5cm] (\x:2) circle (2pt);

\draw[xshift=4.5cm] (2,0) \foreach \x in {144,288,72,216,360} {
-- (\x:2)};

\node[shape=circle, above] (1) at (35:2) {\LARGE$4$} ;
\node[shape=circle, above] (1) at (108:2) {\LARGE$2$} ;
\node[shape=circle, left] (1) at (180:2) {\LARGE$1$} ;
\node[shape=circle, left] (1) at (252:2) {\LARGE$3$} ;
\node[shape=circle, above] (1) at (324:2) {\LARGE$5$} ;

\node[shape=circle, xshift=4.5cm, above] (1) at (144:2) {\LARGE$6$} ;
\node[shape=circle, xshift=4.5cm, above] (1) at (72:2) {\LARGE$8$} ;
\node[shape=circle, xshift=4.5cm, right] (1) at (360:2) {\LARGE$10$} ;
\node[shape=circle, xshift=4.5cm, right] (1) at (288:2) {\LARGE$9$} ;
\node[shape=circle, xshift=4.5cm, above] (1) at (216:2) {\LARGE$7$} ;
\end{tikzpicture}
\caption{}\label{fig:M1}  
\end{subfigure}
\hspace{-2cm}
\begin{subfigure}[b]{0.45\linewidth}
\centering
\begin{tikzpicture}[scale=0.4, transform shape]
\tikzset{cross/.style={cross out, draw=black, minimum size=2*(#1-\pgflinewidth), inner sep=0pt, outer sep=0pt},
cross/.default={1pt}}

\draw[xshift=0.0] (1.618,1.17557) \foreach \x in {108,180,252,324} {
-- (\x:2)};
\foreach \x in {252,324,36,108,180}
\draw[fill] (\x:2) circle (2pt);

\draw[xshift=0.0] (-2,0) \foreach \x in {324,108,252,36,180} {
-- (\x:2)};

\draw[fill, xshift=2cm] (36:2) circle (2pt);
\draw[fill, xshift=2cm] (324:2) circle (2pt);

\node[](1) at (36:2){};
\node[](2) at (108:2){};
\node[](3) at (180:2){};
\node[](4) at (252:2){};
\node[](5) at (324:2){};
\node[xshift=2cm](6) at (324:2){};
\node[xshift=2cm](7) at (36:2){};

\node[shape=circle, above] (a) at (36:2) {\LARGE$4$} ;
\node[shape=circle, above] (b) at (108:2) {\LARGE$2$} ;
\node[shape=circle, left] (c) at (180:2) {\LARGE$1$} ;
\node[shape=circle, left] (d) at (252:2) {\LARGE$3$} ;
\node[shape=circle, above] (e) at (324:2) {\LARGE$5$} ;

\node[shape=circle, xshift=2cm, above] (f) at (324:2) {\LARGE$7$} ;
\node[shape=circle, xshift=2cm, above] (g) at (36:2) {\LARGE$6$} ;

\path[-]  (1) edge (7);
\path[-]  (5) edge (6);

\end{tikzpicture}\caption{}\label{fig:M2}  
\end{subfigure}
\caption{Variables fixed to 1 for the solutions in $\mathcal{F}_{n}(2R5)$  (a)  and in $\mathcal{F}_{n}(1R5)$ (b).}
\label{fig:fix12}
\end{figure}
Based on the above decomposition, for $n=16, 17, 18$ we have solved model $\model{n}{1t}{H}$ three times: 
\begin{itemize}
    \item with feasible set $\mathcal{F}_{n}(NOR5)$ by removing  the 1-toughness constraints (\ref{eq:1T}) for $n \leq 17$ and  by solving problem $\mathcal{T}_{2,3}$ for $n=18$;
\item with feasible set $\mathcal{F}_{n}(1R5)$ by removing  the 1-toughness constraints when  $n=16$, by solving problem $\mathcal{T}_{2,3}$ when $n=17$ (to  exclude the two graphs in Fig. \ref{fig:T17_2_G6}) and by solving both the problems  $\mathcal{T}_{2,3}$ and  $\mathcal{T}_{4,5}$ when $n=18$;
\item with feasible set $\mathcal{F}_{n}(2R5)$ by solving problem $\mathcal{T}_{2,3}$ when $n=16$ (to exclude the graph in Fig. \ref{Figure:n16}) and when $n=18$ and both the problems  $\mathcal{T}_{2,3}$ and  $\mathcal{T}_{4,5}$ when $n=17$ (to exclude the graphs in Fig. \ref{fig:T17_2_R5} and in Fig. \ref{fig:T17_4}, respectively). 
\end{itemize}
When $n \geq 19$ the above decomposition is not as useful. Indeed, based on the preliminary analysis, one has to solve both problems $\mathcal{T}_{2,3}$ and  $\mathcal{T}_{4,5}$ also on the solution set $\mathcal{F}_{n}(NOR5)$. This makes the decomposition no longer effective.

\section{Overall results}\label{sec:results}

Our computational study has determined  the value $f(\cC, \lnot \cH)$ for alla cases except  when $\cC$ is the 1-toughness property and $\cH$ is the traceability property. Furthermore, based on the next fact, for each  $n \geq f(\cC, \lnot \cH)$ we are able to construct a 4-regular graph with $n$ vertices that satisfies property $\cC$ and does not satisfy property $\cH$.

\begin{fact} \label{fact:results} Let $G$ be a  $(\cC, \lnot \cH)$-graph with $n$ vertices. Then\\
(i) if $G$ contains a subgraph $H$ isomorphic to  $K_{4}$ then for each $k \in \mathbb{N}$   the graph $G'$ obtained by replacing $H$ by any of the two graphs $T_{4+2k}$ and  $T_{5+2k}$ in Fig. \ref{fig:gadgets1} is a $(\cC,\lnot \cH)$-graph with either $n+ 2k$ or $n+ 1 +2k$ vertices;\\
(ii) if $G$ contains a subgraph $H$ isomorphic to the graph $R_{5}$ then for each   $k \in \mathbb{N}$ the graph $G'$ obtained by replacing $H$ by any of the two graphs $R_{5+2k}$ and $R_{6+2k}$  in Fig. \ref{fig:gadgets2} is a $(\cC,\lnot \cH)$-graph with either $n + 2k$ or $n+ 1 +2k$ vertices.
\end{fact}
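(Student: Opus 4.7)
The plan is to verify three things in turn: $G'$ is $4$-regular with the stated vertex count, $G'$ still satisfies $\cC$, and $G'$ still fails $\cH$. The first is a direct inspection of the gadget figures: each $T_m$ should have four distinguished \emph{ports} of internal degree $3$ (matching the four vertices of $K_4$) and each $R_m$ should have two ports of internal degree $3$ (matching the two degree-three vertices of $R_5=K_5-e$), while every remaining vertex has internal degree $4$. Identifying the ports of the gadget with the vertices of $H$ incident to external edges preserves $4$-regularity, and $|V(G')|=n-|V(H)|+m$ is $n+2k$ or $n+1+2k$ as claimed.

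For property $\cC$, I would case-split on whether a hypothetical violating cutset $S\subseteq V(G')$ meets the set of non-port interior vertices of the gadget. If $S$ avoids the interior, contracting the gadget back to $H$ transports the violation to $G$, contradicting $G\in\cC$. If $S$ meets the interior, I would rely on two inspection-level properties of the gadgets: that the gadget is itself $2$-connected, and that removing any $s$ of its internal vertices leaves at most $s$ components, each containing at least one port. These let one convert the violation into a no-larger cut of $G$ with at least as many components, again contradicting $G\in\cC$. The $1$-toughness subcase is the delicate one and requires a careful component count after interior deletions.

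The core is the preservation of $\lnot\cH$, argued by contraposition. A Hamilton cycle $C'\subseteq G'$ intersects the gadget in a family of internally vertex-disjoint paths whose endpoints are ports and whose union covers every internal vertex of the gadget exactly once; call the induced matching on the ports a \emph{port-pairing} $\mu$. The key lemma is an \emph{interchangeability property}: every port-pairing realisable by an internal-vertex-covering family of disjoint port-to-port paths inside the gadget is also realisable by such a family inside $H$. Granted this, one splices out the gadget-side trace of $C'$ and splices in a corresponding $H$-side family, producing a Hamilton cycle of $G$ and contradicting $G\not\in\cH$. The cases $\cH\in\{T,HT\}$ proceed analogously with path-traces; for $HT$ one additionally uses $n>|V(H)|$ and the symmetry of the gadget on its ports to argue that a vertex of $G$ witnessing non-homogeneous traceability may be chosen outside $H$.

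The principal obstacle is the interchangeability lemma. For $H=K_4$ the $H$-side is automatic, since $K_4$ realises every conceivable port-pairing on its four vertices (any two are adjacent, as are the remaining two), so the lemma reduces to a finite check that $T_{4+2k}$ and $T_{5+2k}$ admit no additional realisable patterns. For $H=R_5$ the only non-trivial realisable pairing is the one matching its two degree-three vertices, so the lemma reduces to exhibiting a Hamilton path between the two ports of each $R_{5+2k}$ and $R_{6+2k}$ and confirming the absence of other covering port-to-port patterns. Both reductions are finite inspections on the parametric families of Figures \ref{fig:gadgets1}--\ref{fig:gadgets2}, and I expect the bulk of the formal work to lie in this bookkeeping rather than in any conceptual difficulty.
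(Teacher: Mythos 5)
Your overall strategy is the same as the paper's, which disposes of the statement in four sentences: the gadgets have the right degree sequence and are themselves $1$-tough, so substitution preserves $\cC$; and a Hamilton structure in $G'$ would transfer back to $G$ because $K_4$ (and $R_5=K_5-e$, which is Hamilton-connected by Ore's condition) realises every port-pattern that the gadget can realise. Your ``interchangeability lemma'' is exactly the content of the paper's remark ``otherwise, being $H$ a complete graph, also $G$ would satisfy this property,'' and your finite-check reductions for $K_4$ and $R_5$ are correct.

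Two of your auxiliary claims, however, are false or unjustified as stated. First, in the $\cC$-preservation step you rely on the property that deleting any $s$ interior vertices of a gadget leaves at most $s$ components \emph{each containing a port}. The gadgets are ladders of $K_{2,2}$-joined columns, so deleting two non-adjacent interior columns of $T_{4+2k}$ (for $k\ge 3$) isolates the column between them, a component containing no port; such port-free components have no image in $G$ and cannot be ``transported'' to a cut of $G$. The repair is an amortised count rather than a transport: a column interval becomes a separate piece only when a whole column (two vertices) is deleted on each side, so deleting $s_{\mathrm{int}}$ interior vertices creates at most $\lfloor s_{\mathrm{int}}/2\rfloor$ components beyond those of $G-\bigl(S\setminus V(\text{gadget interior})\bigr)$, which is enough to contradict $1$-toughness of $G$. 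Second, for $\cH=HT$ you propose to choose the non-traceability witness outside $V(H)$; this is not always possible (the unique witness may be a vertex of $K_4$). It is also unnecessary: if the witness $v$ lies in $V(H)$, take as witness in $G'$ the corresponding port, and transfer a hypothetical Hamilton path of $G'$ starting there back to a Hamilton path of $G$ starting at $v$ using the same interchangeability lemma (whose $H$-side patterns must then allow one or two free, non-port endpoints --- which $K_4$ and $K_5-e$ do, being Hamilton-connected). With these two repairs your plan is sound and coincides in substance with the paper's argument.
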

\begin{proof} The graphs $T_{4+2k}$ and  $T_{5+2k}$ are 1-tough graphs having four vertices of degree 3 and the other vertices of degree 4.  Thus each of them may be substituted for any complete subgraph of $G$ with 4 vertices leading to a graph $G'$ which still satisfies property $\cC$. The graph $G'$ cannot satisfy  property $\cH$, otherwise, being $H$ a  complete graph, also $G$ would satisfy this property. A similar argument can be used to prove statement (ii).
\end{proof}

\begin{figure}
\centering
\hspace{0cm}
\begin{subfigure}[b]{0.40\linewidth}
\begin{tikzpicture}[scale=0.60, transform shape]
\foreach \x in {   5, 6.5, 8}{
\draw[fill] (\x,0) circle (1.5pt);
\draw[fill] (\x,2.5) circle (1.5pt);
};

\node[shape=circle, radius=2pt] (x1) at (1.5,2.5){} ;
\node[shape=circle, radius=2pt] (x2) at (1.5,0){} ;
\node[shape=circle, radius=2pt] (v1) at (3.5,2.5){} ;
\draw[fill] (3.5,2.5) circle (1.5pt);
\draw[fill] (3.5,0) circle (1.5pt);
\node(e1) at (3.2,3){\Large $v_{1}$} ;
\node[shape=circle, radius=2pt] (v2) at (3.5,0){} ;
\node(e2) at (3.2,-0.5){\Large $v_{2}$} ;
\node(e2) at (8.5,3){\Large $v_{3}$} ;
\node(e4) at (8.5,-0.5){\Large $v_{4}$} ;
\node(e4) at (6.5,3){\Large $v_{3 + 2k}$} ;
\node(e5) at (6.5,-0.5){\Large $v_{4 + 2k}$} ;


\foreach \x in {0, 2.5}{
\draw (3.5,\x) -- (5,0);
\draw (3.5,\x) -- (5,2.5);
\draw[dashed] (6.5,\x) -- (5,0);
\draw[dashed] (6.5,\x) -- (5,2.5);
\draw[dashed] (8,\x) -- (6.5,0);
\draw[dashed] (8,\x) -- (6.5,2.5);};

\draw (3.5, 0) -- (3.5,2.5);
\draw (8, 0) -- (8,2.5);


\end{tikzpicture}
\end{subfigure}
\begin{subfigure}[b]{0.40\linewidth}
\begin{tikzpicture}[scale=0.60, transform shape]
\foreach \x in {   5, 6.5, 8}{
\draw[fill] (\x,0) circle (1.5pt);
\draw[fill] (\x,2.5) circle (1.5pt);
};

\node[shape=circle, radius=2pt] (x1) at (1.5,2.5){} ;
\node[shape=circle, radius=2pt] (x2) at (1.5,0){} ;
\node[shape=circle, radius=2pt] (v1) at (3.5,2.5){} ;
\draw[fill] (3.5,2.5) circle (1.5pt);
\draw[fill] (3.5,0) circle (1.5pt);
\draw[fill] (4.25,1.25) circle (1.5pt);
\draw[fill] (3.5,2.5) circle (1.5pt);
\draw[fill] (3.5,0) circle (1.5pt);

\node(e1) at (3.2,3){\Large $v_{1}$} ;
\node[shape=circle, radius=2pt] (v2) at (3.5,0){} ;
\node(e2) at (3.2,-0.5){\Large $v_{2}$} ;
\node(e2) at (8.5,3){\Large $v_{3}$} ;
\node(e4) at (8.5,-0.5){\Large $v_{4}$} ;
\node(e5) at (4.8, 1.25) {\Large $v_{5}$} ;
\node(e4) at (6.5,3){\Large $v_{3 + 2k}$} ;
\node(e5) at (6.5,-0.5){\Large $v_{4 + 2k}$} ;


\foreach \x in {0, 2.5}{
\draw (3.5,\x) -- (5,0);
\draw (3.5,\x) -- (5,2.5);
\draw[dashed] (6.5,\x) -- (5,0);
\draw[dashed] (6.5,\x) -- (5,2.5);
\draw[dashed] (8,\x) -- (6.5,0);
\draw[dashed] (8,\x) -- (6.5,2.5);};

\draw (3.5, 0) -- (3.5,2.5);
\draw (8, 0) -- (8,2.5);


\end{tikzpicture}
\end{subfigure}

\caption{The graph $T_{4 + 2k}$ with $4 + 2k$ vertices (on the left) and the graph $T_{5 + 2k}$ with $5 + 2k$ vertices (on the right).}
\label{fig:gadgets1}
\end{figure}
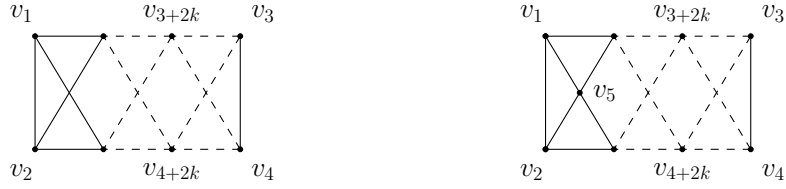

\begin{figure}
\centering
\hspace{0cm}
\begin{subfigure}[b]{0.40\linewidth}
\begin{tikzpicture}[scale=0.60, transform shape]
\foreach \x in {   5, 6.5, 8}{
\draw[fill] (\x,0) circle (1.5pt);
\draw[fill] (\x,2.5) circle (1.5pt);
};
%

\node[shape=circle, radius=2pt] (x1) at (1.5,2.5){} ;
\node[shape=circle, radius=2pt] (x2) at (1.5,0){} ;
\node[shape=circle, radius=2pt] (v1) at (3.5,2.5){} ;
\draw[fill] (3.5,2.5) circle (1.5pt);
\draw[fill] (3.5,0) circle (1.5pt);
\draw[fill] (9.5,1.25) circle (3pt);
\node(e1) at (3.2,3){\Large $v_{1}$} ;
\node[shape=circle, radius=2pt] (v2) at (3.5,0){} ;
\node(e2) at (10,1.25){\Large $v_{3}$} ;
\node(e3) at (3.2,-0.3){\Large $v_{2}$} ;
\node(e4) at (4.8,3){\Large $v_{4}$} ;
\node(e5) at (4.8,-0.5){\Large $v_{5}$} ;
\node(e4) at (7.6,3){\Large $v_{4 + 2k}$} ;
\node(e5) at (7.6,-0.5){\Large $v_{5 + 2k}$} ;


\foreach \x in {0, 2.5}{
\draw (3.5,\x) -- (5,0);
\draw (3.5,\x) -- (5,2.5);
\draw[dashed] (6.5,\x) -- (5,0);
\draw[dashed] (6.5,\x) -- (5,2.5);
\draw[dashed] (8,\x) -- (6.5,0);
\draw[dashed] (8,\x) -- (6.5,2.5);};

\draw (8,0) -- (8,2.5);
\draw (8,0) -- (9.5,1.25);
\draw (8,2.5) -- (9.5,1.25);

\path[-](3.5,0) edge [bend right=80] (9.5,1.25);
\path[-](3.5,2.5) edge [bend left=80] (9.5,1.25);

\end{tikzpicture}
\end{subfigure}
\begin{subfigure}[b]{0.40\linewidth}
\begin{tikzpicture}[scale=0.60, transform shape]
\foreach \x in {   5, 6.5, 8}{
\draw[fill] (\x,0) circle (1.5pt);
\draw[fill] (\x,2.5) circle (1.5pt);
};
%

\node[shape=circle, radius=2pt] (x1) at (1.5,2.5){} ;
\node[shape=circle, radius=2pt] (x2) at (1.5,0){} ;
\node[shape=circle, radius=2pt] (v1) at (3.5,2.5){} ;
\draw[fill] (3.5,2.5) circle (1.5pt);
\draw[fill] (3.5,0) circle (1.5pt);
\draw[fill] (9.5,1.25) circle (1.5pt);
\draw[fill] (4.25,1.25) circle (1.5pt);

\node(e1) at (3.2,3){\Large $v_{1}$} ;
\node[shape=circle, radius=2pt]  at (3.5,0){} ;
\node(e2) at (10,1.25){\Large $v_{3}$} ;
\node(e3) at (3.2,-0.3){\Large $v_{2}$} ;
\node(e4) at (4.8,3){\Large $v_{5}$} ;
\node(e5) at (4.8,-0.5){\Large $v_{6}$} ;
\node(e6) at (3.7,1.25){\Large $v_{4}$} ;
\node(e4k) at (7.6,3){\Large $v_{5 + 2k}$} ;
\node(e5k) at (7.6,-0.5){\Large $v_{6 + 2k}$} ;


\foreach \x in {0, 2.5}{
\draw (3.5,\x) -- (5,0);
\draw (3.5,\x) -- (5,2.5);
\draw[dashed] (6.5,\x) -- (5,0);
\draw[dashed] (6.5,\x) -- (5,2.5);
\draw[dashed] (8,\x) -- (6.5,0);
\draw[dashed] (8,\x) -- (6.5,2.5);};

\draw (8,0) -- (8,2.5);
\draw (8,0) -- (9.5,1.25);
\draw (8,2.5) -- (9.5,1.25);

\path[-](3.5,0) edge [bend right=80] (9.5,1.25);
\path[-](3.5,2.5) edge [bend left=80] (9.5,1.25);

\end{tikzpicture}
\end{subfigure}

\caption{The graphs $R_{5 + 2k}$ with $5 + 2k$ vertices (on the left) and the graph $R_{6 + 2k}$ with $6 + 2k$ vertices (on the right).}
\label{fig:gadgets2}

\end{figure}
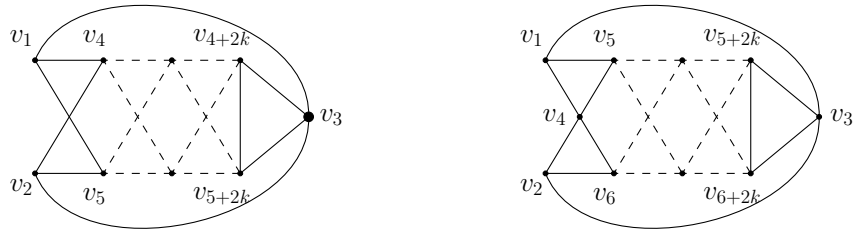

The results that we obtained for the different problems $\genericproblem$ can be summarized as follows:
\subsubsection*{Connectivity} As remarked in Section \ref{sec:notation}, the only open question about the hamiltonian properties of a connected 4-regular graph concerns the traceability. Moreover, by Theorem \ref{thm:CS}~ii) it is $f(c, \lnot T) \geq 16$. 
\begin{prop} \label{prop:connectivity}
Every connected $4$-regular graph with $n\leq 17$ is traceable. Moreover, for every $n \geq 18$ there exists a connected 4-regular graph with $n$ vertices that is not traceable. Thus $f(c, \lnot T) = 18$.
\end{prop}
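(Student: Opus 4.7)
The plan is to prove the proposition in three steps, matching the structure $f(c,\lnot T)\ge 18$, $f(c,\lnot T)\le 18$, and the ``for every $n\ge 18$'' extension.

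First, for the lower bound, I would invoke Theorem \ref{thm:CS}(ii), which already gives $f(c,\lnot T)\ge 16$, so only the cases $n=16$ and $n=17$ remain open. For each of these, I would formulate the feasibility ILP $\model{n}{c}{T}$ built from the degree constraints \eqref{eq:4R}, the connectivity cut constraints \eqref{eq:conn}, and the no-Hamilton-path constraints \eqref{eq:NoPli} for every starting vertex $i\in V^{n}$. I would then solve this model by the branch-and-cut scheme of Section \ref{sec:bac}, separating \eqref{eq:NoPli} via shortest Hamilton path computations (branch-and-bound TSP-style) and \eqref{eq:conn} via min-cut / articulation-point checks, while also applying the path-length decomposition of Section \ref{subsec:decomposition} (fix a $P_{k-1}$, restrict path constraints to $\mathcal{P}_k(i)$, scan $k=8,\ldots,n-1$) and orbital branching to kill symmetry. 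Proving infeasibility of $\model{16}{c}{T}$ and $\model{17}{c}{T}$ would yield $f(c,\lnot T)\ge 18$.

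Second, for the upper bound, I would run $\model{18}{c}{T}$ with the same machinery; the solver is expected to return a feasible $x^*$ whose associated graph $G(x^*)$ is 4-regular, connected, and contains no Hamilton path. Exhibiting this single graph gives $f(c,\lnot T)\le 18$, hence $f(c,\lnot T)=18$.

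Finally, for the extension to all $n\ge 18$, I would inspect the feasible graph produced above and locate either a $K_{4}$ subgraph or a copy of $R_{5}$ inside it. Assuming, say, a $K_{4}$ is present, Fact \ref{fact:results}(i) allows me to substitute $T_{4+2k}$ (adding $2k$ vertices) or $T_{5+2k}$ (adding $2k+1$ vertices) for that $K_{4}$, keeping 4-regularity, connectivity, and non-traceability. Choosing $k=0,1,2,\ldots$ in each family covers all orders $18,20,22,\ldots$ and $19,21,23,\ldots$, thereby producing a $(c,\lnot T)$-graph for every $n\ge 18$. If the 18-vertex witness happens to contain $R_{5}$ rather than $K_{4}$, Fact \ref{fact:results}(ii) with $R_{5+2k}$ and $R_{6+2k}$ does the same job.

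The main obstacle is clearly the $n=17$ infeasibility proof: the ILP is dense with exponentially many Hamilton-path cuts and its symmetry group is large, so without the path-length decomposition together with orbital branching and orbital fixing the search would not close in reasonable time. Carrying out the separation bookkeeping carefully so that the branch-and-cut actually certifies infeasibility (rather than merely failing to find a solution within a time limit) is the delicate step; once that is established, the $n=18$ construction and the gadget-based extension are comparatively routine.
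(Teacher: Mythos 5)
Your proposal matches the paper's approach essentially exactly: Theorem \ref{thm:CS}(ii) plus ILP infeasibility of $\model{n}{c}{T}$ for $n=16,17$ (solved via the $Q(n,c,\lnot P_k)$ decomposition and branch-and-cut), a feasible 18-vertex witness, and the gadget substitution of Fact \ref{fact:results} to cover all $n\ge 18$ (the paper's witness in fact contains an $R_5$, so part (ii) of that fact is the one used). No gaps to report.
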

Indeed, the model $\model{n}{c}{T}$  happened to be infeasible for  $n=16 $ and $n=17$. Moreover, for $n = 18$ our code produced the nontraceable graph  in Fig. \ref{fig:n18bis} which contains a subgraph isomorphic to $R_{5}$. Then the statement follows from Fact \ref{fact:results}.

\begin{figure}
\centering
\begin{minipage}[t]{0.45\linewidth}
\centering
\begin{tikzpicture}[scale=0.5, transform shape]

 spigoli dell'R5 a sinistra 
\draw[xshift=0.0] (1.618,1.17557) \foreach \x in {108,180,252,324} {
-- (\x:2)};
\foreach \x in {252,324,36,108,180}
\draw[fill] (\x:2) circle (2pt);

\draw[xshift=0.0] (-2,0) \foreach \x in {324,108,252,36,180} {
-- (\x:2)};

\draw[xshift=8cm] (-1.618,-1.17557) \foreach \x in {288,360,72,144} {
-- (\x:2)};
[xshitf=8cm]\foreach \x in {72,144,216,288,360}
\draw[fill, xshift=8cm] (\x:2) circle (2pt);

\draw[xshift=8cm] (2,0) \foreach \x in {144,288,72,216,360} {
-- (\x:2)};

\draw[xshift=4cm, yshift=4cm] (1.17557,-1.618) \foreach \x in {-54, 18, 90, 162, 234} {
-- (\x:2)};
[xshitf=4cm, yshift=4cm]\foreach \x in {-54, 18, 90, 162, 234}
\draw[fill, xshift=4cm, yshift=4cm] (\x:2) circle (2pt);

\draw[xshift=4cm, yshift=4cm] (1.17557,-1.618) \foreach \x in {90,234,18,162,-54} {
-- (\x:2)};

\node[ xshift=4cm] (1) at (-1,0) {};
\fill (1) circle[radius=2.5pt];
\node[ xshift=4cm] (2) at (1,0) {};
\fill (2) circle[radius=2.5pt];
\node[ xshift=4cm] (3) at (0,1) {};
\fill (3) circle[radius=2.5pt];
\draw[xshift=4cm] (-1,0) -- (-2.3825, 1.17557);
\draw[xshift=4cm] (-1,0) -- (-2.3825, -1.17557);
\draw[xshift=4cm] (1,0) -- (2.3825, 1.17557);
\draw[xshift=4cm] (1,0) -- (2.3825, -1.17557);
\draw[xshift=4cm] (0,1) -- (-1.17557, 2.382);
\draw[xshift=4cm] (0,1) -- (1.17557, 2.382);
\draw[xshift=4cm] (1,0) -- (0,1);
\draw[xshift=4cm] (1,0) -- (-1,0);
\draw[xshift=4cm] (0,1) -- (-1,0);

\end{tikzpicture}
\caption{A  4-regular connected graph with $n=18$ that is not traceable.} 
\label{fig:n18bis}
\end{minipage}
\hspace{0.5cm}
\begin{minipage}[t]{0.45\linewidth}
\centering
\begin{tikzpicture}[scale=0.4, transform shape]

\draw (1.618,1.17557) \foreach \x in {108,180,252,324} {
-- (\x:2)};
\foreach \x in {252,324,36,108,180}
\draw[fill] (\x:2) circle (2pt);
\draw (-2,0) \foreach \x in {324,108,252,36,180} {
-- (\x:2)};

\draw[xshift=4cm] (1.618,1.17557) \foreach \x in {108,180,252,324} {
-- (\x:2)};
\foreach \x in {252,324,36,108,180}
\draw[fill, xshift=4cm] (\x:2) circle (2pt);
\draw[xshift=4cm] (-2,0) \foreach \x in {324,108,252,36,180} {
-- (\x:2)};

\draw[xshift=8cm] (1.618,1.17557) \foreach \x in {108,180,252,324} {
-- (\x:2)};
\foreach \x in {252,324,36,108,180}
\draw[fill, xshift=8cm] (\x:2) circle (2pt);
\draw[xshift=8cm] (-2,0) \foreach \x in {324,108,252,36,180} {
-- (\x:2)};

\draw[xshift=12cm] (1.618,1.17557) \foreach \x in {108,180,252,324} {
-- (\x:2)};
\foreach \x in {252,324,36,108,180}
\draw[fill, xshift=12cm] (\x:2) circle (2pt);
\draw[xshift=12cm] (-2,0) \foreach \x in {324,108,252,36,180} {
-- (\x:2)};

\draw[fill] (6,5) circle (2pt);
\draw[fill] (6,-5) circle (2pt);

\draw (6,5) -- (36:2);
\draw[xshift=4cm] (6-4,5) -- (36:2);
\draw[xshift=8cm] (6-8,5) -- (36:2);
\draw[xshift=12cm] (6-12,5) -- (36:2);

\draw (6,-5) -- (324:2);
\draw[xshift=4cm] (6-4,-5) -- (324:2);
\draw[xshift=8cm] (6-8,-5) -- (324:2);
\draw[xshift=12cm] (6-12,-5) -- (324:2);

\end{tikzpicture}
\caption{4-regular $2$-connected graph with $n=22$ that is not traceable.}
\label{Figure:n22}

\end{minipage}
\end{figure}

\subsubsection*{2-connectivity} By  Theorem \ref{thm:hilbig}, every 2-connected 4-regular graph $G$ with less than 16 vertices is hamiltonian, thus homogeneously  traceable and traceable. 
\begin{prop} \label{prop:2connectivityHHT} For every $n \geq 16$ there exists a 2-connected 4-regular graph with $n$ vertices which is not 1-tough, thus it is neither hamiltonian nor homogeneously traceable. This implies that $f(2c, \lnot H) = f(2c, \lnot HT) = 16$.
\end{prop}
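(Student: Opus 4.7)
The plan is to split the statement into a lower bound, a base case of size 16, and a propagation step that lifts the base case to every larger $n$. The lower bound $f(2c,\lnot H) \geq 16$ and $f(2c,\lnot HT) \geq 16$ is already recorded from Theorem \ref{thm:hilbig} combined with Fact \ref{prop:implications1}: every 2-connected 4-regular graph on at most 15 vertices is hamiltonian and hence homogeneously traceable. So what remains is to exhibit, for every $n \geq 16$, a 2-connected 4-regular graph on $n$ vertices that is neither hamiltonian nor homogeneously traceable.

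For the base case $n = 16$, I would invoke the graph depicted in Figure \ref{Figure:n16}, which the preliminary analysis of Section \ref{sec:preliminary} has already established to be 2-connected, 4-regular, and not 1-tough (removing the two starred vertices creates three components). Since every hamiltonian graph is 1-tough and, by Fact \ref{prop:implications2}, every homogeneously traceable graph is 1-tough, this graph fails both $\cH = H$ and $\cH = HT$. In particular $f(2c,\lnot H) \leq 16$ and $f(2c,\lnot HT) \leq 16$, which combined with the lower bound yields the claimed equalities.

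To cover every $n > 16$, I would apply Fact \ref{fact:results}(ii) to one of the two disjoint copies of $R_5$ that the base graph contains. Writing $n - 16 = 2k$ with $k \geq 1$, we replace one copy of $R_5$ by $R_{5+2k}$, whereas for $n - 16 = 2k + 1$ with $k \geq 0$ we replace it by $R_{6+2k}$. In either case Fact \ref{fact:results}(ii) delivers a 2-connected 4-regular graph on exactly $n$ vertices that is neither hamiltonian nor homogeneously traceable, so the construction sweeps out all $n \geq 16$.

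No substantial obstacle is expected here, since the difficult content has already been carried out: the existence of the $n=16$ example is established by the ILP computation on $\mathcal{Q}_{16,2}$ in Section \ref{sec:preliminary}, and the preservation of the $(\cC, \lnot \cH)$ properties under $R_5$-substitution is packaged in Fact \ref{fact:results}. The only point that needs a brief verification is the parity matching, namely that the families $\{16 + 2k\}_{k\geq 0}$ and $\{17 + 2k\}_{k \geq 0}$ together exhaust all integers $\geq 16$, which is immediate.
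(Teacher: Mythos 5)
Your proof is correct and follows essentially the same route as the paper: the base graph of Figure \ref{Figure:n16} (2-connected, 4-regular, not 1-tough, hence neither hamiltonian nor homogeneously traceable via Fact \ref{prop:implications2}), extended to all $n \geq 16$ by the $R_5$-substitution of Fact \ref{fact:results}(ii), with the lower bound supplied by Theorem \ref{thm:hilbig}. The only cosmetic difference is that you conclude non-hamiltonicity and non-homogeneous-traceability of the larger graphs directly from Fact \ref{fact:results} applied to each property $\cH \in \{H, HT\}$ rather than via non-1-toughness of the enlarged graph, which is if anything slightly cleaner, since Fact \ref{fact:results} is stated only for hamiltonian properties $\cH$ and does not by itself assert that non-1-toughness is preserved.
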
 
Indeed, the preliminary analysis found out the graph in Fig. \ref{Figure:n16} which is 2-connected but not 1-tough. Since this graph contains an $R_{5}$, the statement follows from Fact \ref{fact:results}.

\begin{prop} \label{prop:2connectivityT} Every 2-connected $4$-regular graph with $n\leq 21$ is traceable. Moreover for every $n \geq 22$ there exists a 2-connected  $4$-regular graph with 22 vertices which is nontraceable. Thus $f(2c, \lnot T) = 22$.
\end{prop}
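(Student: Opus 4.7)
The plan splits into three tasks: establish the lower bound $f(2c,\lnot T)\ge 22$, exhibit a witness proving $f(2c,\lnot T)\le 22$, and then extend the witness to every $n\ge 22$.

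For the lower bound, I would invoke the ILP framework of Section \ref{sec:model}. Specifically, I would run the branch-and-cut procedure on $\model{n}{2c}{T}$ for each $n\in\{16,17,18,19,20,21\}$ (the cases $n\le 15$ being already settled by Theorem \ref{thm:hilbig} combined with Fact \ref{prop:implications1}). To keep computation tractable, I would use the decomposition strategy of Section \ref{subsec:decomposition}: for each $n$, iterate over the maximal path length $k\in\{8,\dots,n-1\}$, fix in the root node the edges of a path $1{-}2{-}\cdots{-}k$, and require that no path of length $k$ extends it. The orbital branching of Section \ref{sec:decision} plus the separation routines for the 2-connectivity constraints (\ref{eq:2C}) and the non-traceability constraints (\ref{eq:NoPli}) finish the job. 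If each subproblem $Q(n,2c,\lnot P_k)$ reports infeasibility for all admissible $k$, then $\model{n}{2c}{T}$ is infeasible, proving $f(2c,\lnot T)>n$ for every $n\le 21$.

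For the upper bound, I would verify that the graph $G_{22}$ depicted in Figure \ref{Figure:n22} is a witness: it has $22$ vertices, every vertex clearly has degree $4$, and deleting any single vertex leaves a connected graph (the ``chain'' of four $R_5$-modules joined by the two top/bottom vertices has enough redundancy that no vertex is a cut-vertex). Non-traceability can be argued structurally: the two distinguished top and bottom vertices together form a cut of size $2$ whose removal produces four connected components (the four $R_5$ modules, each missing the edge that $R_5$ is $K_5$ minus). Any Hamilton path could enter and leave each of the four components through at most two cut-edges at the top or bottom vertex; but a Hamilton path uses at most $3$ edges incident to a given vertex, and a short case analysis on which of the four modules the endpoints lie in shows that no valid traversal visits all $20$ module vertices, forcing a contradiction. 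This yields $f(2c,\lnot T)\le 22$, hence equality.

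For the extension to $n\ge 22$, the key observation is that $G_{22}$ contains (several) subgraphs isomorphic to $R_5$. For any $k\in\mathbb{N}$, applying Fact \ref{fact:results}(ii) to one such subgraph produces a $(2c,\lnot T)$-graph on $22+2k$ vertices (using the gadget $R_{5+2k}$) or on $23+2k$ vertices (using $R_{6+2k}$). Together these cover every integer $n\ge 22$.

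The main obstacle will be the lower-bound computation for $n=20,21$: the models are large, the number of path constraints is exponential, and even with the path-length decomposition the branch-and-cut tree can be formidable. Making the orbital branching effective on the residual symmetry group after the initial path-fixing is the most delicate part, and will require the careful implementation discussed in Section \ref{sec:computationalresults}. The verification of non-traceability of $G_{22}$, by contrast, is a finite combinatorial check that requires only bookkeeping.
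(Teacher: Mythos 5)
Your proposal is correct and follows essentially the same route as the paper: ILP infeasibility of $\model{n}{2c}{T}$ for the lower bound, the graph of Fig.~\ref{Figure:n22} as the witness at $n=22$, and Fact~\ref{fact:results}(ii) applied to an $R_5$ subgraph for all $n\geq 22$. The only minor differences are that the paper dispatches $n\leq 17$ by citing the stronger connectivity result (Proposition~\ref{prop:connectivity}) rather than re-running the $2$-connected model, and that your explicit cut argument for non-traceability (removing the two hub vertices leaves four components, exceeding the bound of $|S|+1$ for a traceable graph) is a nice certificate that the paper leaves implicit in the code's output.
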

The result follows from Proposition \ref{prop:connectivity} when $n \leq 17$. Moreover  the model  $\model{n}{2c}{T}$  happened to be infeasible for every $18 \leq n \leq 21$.  Finally, for $n=22$ our code produced the 2-connected nontraceable  graph in Fig. \ref{Figure:n22}. Since this graph contains an $R_{5}$, the statement follows from Fact \ref{fact:results}.

\subsubsection*{1-toughness} 

\begin{prop} \label{prop:1toughH} Every 1-tough $4$-regular graph with $n \leq 17$ is hamiltonian. Moreover for every $n \geq 18$ there exists a 1-tough $4$-regular graph with $n$ vertices which is not hamiltonian. Thus $f(1t, \lnot H) = 18$.
\end{prop}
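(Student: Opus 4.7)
The plan is to establish $f(1t, \lnot H) = 18$ by combining three ingredients: an easy lower bound from Hilbig's theorem for small $n$, an ILP-based infeasibility argument for $n = 16, 17$, and a gadget-based construction giving explicit 1-tough non-hamiltonian 4-regular graphs of every order $n \geq 18$.

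For the lower bound, the cases $n \leq 15$ are immediate, since every 1-tough graph is 2-connected and Theorem \ref{thm:hilbig} already guarantees that every 2-connected 4-regular graph on at most 15 vertices is hamiltonian. The substantive work is to verify that $\model{n}{1t}{H}$ is infeasible for $n = 16, 17$. Here I would invoke the decomposition of Section \ref{subsec:decomposition} and partition the feasible set into $\mathcal{F}_n(NOR5)$, $\mathcal{F}_n(1R5)$, and $\mathcal{F}_n(2R5)$ according to the number of vertex-disjoint copies of $R_5$. The preliminary analysis of Section \ref{sec:preliminary} is what makes this tractable: it shows that for $n = 16, 17$ every 2-connected 4-regular graph that fails to be 1-tough contains at least one $R_5$. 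Consequently, in the NOR5 branch I may drop the 1-toughness constraints (\ref{eq:1T}) entirely and solve $\model{n}{2c}{H}$ augmented with the noR5 inequalities (\ref{eq:NOR5}). In the $1R5$ and $2R5$ branches I would fix variables as prescribed in Fig. \ref{fig:fix12} and separate only the handful of partitions $\mathcal{T}_{k,t}$ that the preliminary analysis identifies as possibly witnessing a toughness violation, namely $\mathcal{T}_{2,3}$, together with $\mathcal{T}_{4,5}$ when $n = 17$. Each resulting subproblem is then fed to the branch-and-cut procedure of Section \ref{sec:bac}, refined by the path-length decomposition $Q(n, 1t, \lnot P_k)$ for $k = 8, \dots, n$ and with orbital branching and orbital fixing to quell symmetry, and is reported infeasible.

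For the matching upper bound, the Bauer--Broersma--Veldman graph of Fig. \ref{Figure:n18}, known from \cite{BBV90} to be 1-tough, 4-regular, and non-hamiltonian, immediately yields $f(1t, \lnot H) \leq 18$. To extend this to all larger orders I would observe that this graph contains many subgraphs isomorphic to $K_4$ (the six $2 \times 2$ blocks in its drawing are themselves complete on four vertices), so Fact \ref{fact:results}(i) is applicable: replacing one such $K_4$ by $T_{4+2k}$ produces a 1-tough non-hamiltonian 4-regular graph of order $18 + 2k$, while replacing it by $T_{5+2k}$ produces one of order $19 + 2k$. Letting $k$ range over $\mathbb{N}$ covers every $n \geq 18$.

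The main obstacle is the infeasibility verification for $n = 16, 17$. Since toughness separation is NP-hard in general, solving the full $\model{n}{1t}{H}$ directly would be prohibitively slow even with symmetry breaking. The success of the lower bound therefore rests squarely on coupling the combinatorial classification of Section \ref{sec:preliminary} with the ILP machinery: the classification tells us exactly which violating partitions to look for, so that the 1-toughness separation reduces to solving a small, fixed list of auxiliary ILPs $\mathcal{T}_{k,t}$, rather than an uncontrolled search over all $(k,t)$.
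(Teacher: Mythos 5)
Your proposal is correct and follows essentially the same route as the paper: Hilbig's theorem for $n\le 15$, the $R_5$-based decomposition with the restricted list of separation subproblems $\mathcal{T}_{k,t}$ to certify infeasibility of the ILP for $n=16,17$, the Bauer--Broersma--Veldman graph for $n=18$, and Fact~\ref{fact:results}(i) applied to one of its $K_4$ subgraphs (there are three such blocks, not six, but that is immaterial) to cover all $n\ge 18$. The only cosmetic difference is that you separate $\mathcal{T}_{2,3}$ in the single-$R_5$ branch for $n=16$ where the paper drops the toughness constraints entirely, which is harmless extra work.
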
 
Indeed, the model $\model{n}{1t}{H}$  happened to be infeasible for  $n=16 $ and $n=17$. Moreover, for $n = 18$ our code produced the same 1-tough nonhamiltonian graph  in Fig.\ref{Figure:n18} proposed in  \cite{BBV90}. Since this graph contains a $K_{4}$, the statement follows from Fact \ref{fact:results}.

\begin{prop} \label{prop:1-toughHT} Every 1-tough $4$-regular graph with $n \leq 19$ is homogeneously traceable. Moreover for every $n \geq 20$ there exists a 1-tough  $4$-regular graph with $n$ vertices which is not homogeneously traceable. Thus $f(1t, \lnot HT) = 20$.
\end{prop}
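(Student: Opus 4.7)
The plan is to follow the same general scheme used for the sibling results (Propositions \ref{prop:connectivity}, \ref{prop:2connectivityT}, \ref{prop:1toughH}): first rule out the existence of a $(1t,\lnot HT)$-graph for $n \leq 19$ by showing infeasibility of the ILP model $\model{n}{1t}{HT}$, then exhibit one such graph for $n=20$ produced by the branch-and-cut code, and finally extend to every larger $n$ via the gadget replacements of Fact \ref{fact:results}. For the lower bound, the range $n \leq 17$ is already covered: by Proposition \ref{prop:1toughH}, every 1-tough 4-regular graph on at most 17 vertices is hamiltonian, hence homogeneously traceable by Fact \ref{prop:implications1}. Thus the real work is to certify infeasibility of $\model{n}{1t}{HT}$ for $n \in \{18, 19\}$.

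For these two values I would assemble the model from the degree constraints (\ref{eq:4R}), the 1-toughness constraints (\ref{eq:1T}), and the non-traceability-from-vertex-1 constraints (\ref{eq:NoPl}), then solve it with the branch-and-cut procedure of Section \ref{sec:bac}, invoking the separation routines of Section \ref{subsec:separation} and deploying the $\mathcal{P}_k(1)$-based path decomposition of Section \ref{subsec:decomposition}. The preliminary classification of 2-connected-but-not-1-tough graphs in Section \ref{sec:preliminary} for $n \in \{18, 19\}$ is the key structural input: it tells us exactly which partition sizes $(k,t)$ have to be tried in the auxiliary ILP $\mathcal{T}_{k,t}$ when separating (\ref{eq:1T}), and it allows the same three-case partition $\mathcal{F}_n = \mathcal{F}_n(2R5) \cup \mathcal{F}_n(1R5) \cup \mathcal{F}_n(\text{NOR5})$ used for $\model{n}{1t}{H}$, together with the accompanying edge-fixings of Fig.~\ref{fig:fix12} and the noR5-constraints (\ref{eq:NOR5}), to slash the size of each subproblem. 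Orbital branching and orbital fixing are applied at the top of the tree, with the stabilizer of vertex $1$ (not the full symmetric group) since vertex $1$ is distinguished by the constraints (\ref{eq:NoPl}).

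For the upper bound I would run the same model for $n=20$ and expect a feasible integer solution, giving an explicit 1-tough 4-regular graph on 20 vertices in which no Hamilton path starts at vertex 1. To propagate to every $n \geq 20$, I would inspect this graph and locate either a $K_4$ or an $R_5$ subgraph; applying Fact \ref{fact:results}(i) with the gadgets $T_{4+2k}, T_{5+2k}$ or Fact \ref{fact:results}(ii) with the gadgets $R_{5+2k}, R_{6+2k}$ then produces $(1t,\lnot HT)$-graphs of every order $n \geq 20$, yielding $f(1t,\lnot HT)=20$. The main obstacle is expected to be the infeasibility proof at $n=19$: the homogeneous-traceability constraints single out vertex $1$ and therefore break the rotational symmetry that made orbital branching so effective for $\model{n}{1t}{H}$, so the payoff from orbital fixing is smaller and more nodes of the search tree must be explored; in addition, at $n=19$ the preliminary analysis of Section \ref{sec:preliminary} shows that both $\mathcal{T}_{2,3}$ and $\mathcal{T}_{4,5}$ (and possibly the configurations with $r(G)=2$) must be checked even in $\mathcal{F}_{19}(\text{NOR5})$, so the 1-toughness separation is the dominant computational cost and will drive the choice of which branching/fixing rules are worth keeping.
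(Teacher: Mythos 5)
Your proposal follows essentially the same route as the paper: the case $n\leq 17$ is inherited from Proposition \ref{prop:1toughH} via Fact \ref{prop:implications1}, the cases $n=18,19$ are certified infeasible by branch-and-cut on $\model{n}{1t}{HT}$ with the $\cP_{k}(1)$ path decomposition and the preliminary analysis guiding which $\mathcal{T}_{k,t}$ separations to run, a feasible graph is produced at $n=20$, and Fact \ref{fact:results} extends it to all larger orders. The only (immaterial) divergence is that you also import the $\mathcal{F}_n(2R5)/\mathcal{F}_n(1R5)/\mathcal{F}_n(NOR5)$ partition, which the paper deploys only for $\model{n}{1t}{H}$ and not for the homogeneous-traceability model.
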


The  result follows from Proposition \ref{prop:1toughH} when $n \leq 17$. Moreover  the model  $\model{n}{1t}{HT}$  happened to be infeasible for $n=18$ and $n=19$.  Finally, for $n=20$ our code produced the 1-tough 4-regular graph in Fig. \ref{Figure:n20} which is not homogeneously traceable since it does not contain any Hamilton path from vertex $v$. Since this graph contains both a $K_{4}$ and an $R_{5}$ the statement follows from Fact \ref{fact:results}.

\begin{prop} \label{prop:1-toughT} Every 1-tough $4$-regular graph with $n \leq 21$ is traceable. Moreover for every $n \geq 40$ there exists a 1-tough  $4$-regular nontraceable graph with $n$ vertices. Thus $22 \leq f(1t, \lnot T) \leq 40$.
\end{prop}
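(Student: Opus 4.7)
The lower bound $f(1t, \lnot T) \geq 22$ is essentially free: since every $1$-tough graph is $2$-connected, any $1$-tough $4$-regular graph with $n \leq 21$ is in particular a $2$-connected $4$-regular graph with $n \leq 21$ vertices, and hence is traceable by Proposition \ref{prop:2connectivityT}. So the real content of the statement is the upper bound, namely exhibiting a $1$-tough, $4$-regular, nontraceable graph on some $n_0$ vertices with $n_0 \leq 40$, and then showing existence for every $n \geq 40$.

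For the upper bound, the plan is to \emph{build} rather than search, because direct ILP on $n=40$ is far out of reach. The natural ingredient is the $20$-vertex graph of Figure \ref{Figure:n20} produced in Proposition \ref{prop:1-toughHT}: it is $1$-tough, $4$-regular, and admits no Hamilton path starting at its distinguished vertex $v$. The idea is to take two copies $G_1,G_2$ of this graph, with distinguished vertices $v_1,v_2$, and to glue them together in a $4$-regular, $1$-tough way so that the only possible Hamilton paths in the combined graph would have to begin or end at $v_1$ or $v_2$ in their respective halves — which each half forbids. Concretely, one removes a carefully chosen pair of edges incident to $v_1$ (resp.\ $v_2$) and reconnects the freed half-edges across the two copies so that: (i) every vertex keeps degree $4$; (ii) the new edge-cut forces any Hamilton path of the combined graph to cross between the two halves an odd number of times, forcing at least one endpoint in each half; (iii) that endpoint is constrained to be $v_1$ or $v_2$, a contradiction. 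The resulting graph has $40$ vertices.

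Once this $40$-vertex graph is in hand, extension to all $n \geq 40$ is handled by Fact \ref{fact:results}: since each copy of the $20$-vertex graph already contains both a $K_4$ and an $R_5$ (as noted in the proof of Proposition \ref{prop:1-toughHT}), so does the combined $40$-vertex graph, and one can iterate the substitutions with $T_{4+2k}, T_{5+2k}$ or $R_{5+2k}, R_{6+2k}$ to gain any desired number of additional vertices (both even and odd). This yields $1$-tough $4$-regular nontraceable graphs on $40, 41, 42, \dots$ vertices.

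The main obstacle is the nontraceability proof for the $40$-vertex construction. Checking that $v_i$ is not an endpoint of a Hamilton path of $G_i$ alone is given, but one must rule out \emph{every} Hamilton path of the glued graph, including those that enter and leave a half multiple times. This requires a case analysis on how a putative Hamilton path traverses the (few) gluing edges, combined with the stronger structural fact that for the graph of Figure \ref{Figure:n20} no Hamilton path exists that enters or leaves through the prescribed neighborhood of $v$ in the required pattern. Verifying $1$-toughness of the combined graph is a secondary but nonnegligible task: one must check, for every vertex cut $S$, that $c(G[V\setminus S]) \leq |S|$, which reduces essentially to showing that no new small cut is created at the gluing interface — something the careful choice of reconnection is designed to ensure. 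These two verifications, rather than the construction itself, are where the proof must do real work.
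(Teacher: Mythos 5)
Your proposal takes essentially the same route as the paper: the lower bound via Proposition \ref{prop:2connectivityT} (every $1$-tough graph is $2$-connected), the upper bound via the $40$-vertex graph of Fig.~\ref{Figure:n40} obtained by suitably joining two copies of the graph in Fig.~\ref{Figure:n20}, and the extension to all $n \geq 40$ via Fact~\ref{fact:results} using the $K_4$ and $R_5$ subgraphs. The verification that the glued $40$-vertex graph is indeed $1$-tough and nontraceable, which you correctly single out as the substantive step, is only asserted (not carried out) in the paper itself.
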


The bound  $f(1t, T) \geq 22$ follows from Proposition \ref{prop:2connectivityT}. Moreover, the graph with 40 vertices in Fig. \ref{Figure:n40} that is obtained by suitably connecting two copies of the graph in Fig. \ref{Figure:n20} happens to be a 1-tough 4-regular graph which is not traceable. Since this graph contains both a $K_{4}$ and an $R_{5}$ the statement follows from Fact \ref{fact:results}. 

The above results allow us to fill the entries in boldface of Table \ref{tab:result1} in Section \ref{sec:introduction}.

\begin{figure}[t]
\centering
\begin{minipage}[c]{0.45\linewidth}
\centering
\vspace{1cm}
\begin{tikzpicture}[scale=0.4, transform shape]

\draw[xshift=-4cm] (1.618,1.17557) \foreach \x in {108,180,252,324} {
-- (\x:2)};
\foreach \x in {252,324,36,108,180}
\draw[fill, xshift=-4cm] (\x:2) circle (2pt);
\draw[xshift=-4cm] (-2,0) \foreach \x in {324,108,252,36,180} {
-- (\x:2)};

\draw[xshift=4cm] (-1.618,-1.17557) \foreach \x in {288,360,72,144} {
-- (\x:2)};
[xshitf=8cm]\foreach \x in {72,144,216,288,360}
\draw[fill, xshift=4cm] (\x:2) circle (2pt);
\draw[xshift=4cm] (2,0) \foreach \x in {144,288,72,216,360} {
-- (\x:2)};

\foreach \x in {-2,-1,1,2}{
\draw[fill] (1,\x) circle (2pt);
\draw[fill] (-1,\x) circle (2pt);
\draw (-1,\x) -- (1,\x);
}
\foreach \x in{-1,1}{
\draw (\x,-2) -- (\x,-1);
\draw (\x,1) -- (\x,2);
}
\foreach \x in{-2,1}{
\draw (1,\x) -- (-1,\x+1);
\draw (1,\x+1) -- (-1,\x);
}
\draw[fill] (0,0) circle (2pt);
\node[shape=star , star points=4, inner sep=0pt, draw, minimum size=6pt, thick] (v1) at (0,3) {};
\node[shape=circle] (2) at (0,3.4) {\LARGE$v$} ;

\draw (0,0) -- (-1,-1);
\draw (0,0) -- (-1,1);
\draw (0,0) -- (1,-1);
\draw (0,0) -- (1,1);

\draw (0,3) -- (-1,2);
\draw (0,3) -- (1,2);
\draw[xshift=-4cm] (4,3) -- (36:2);
\draw[xshift=4cm] (-4,3) -- (144:2);

\draw[xshift=-4cm] (3,-2) -- (324:2);
\draw[xshift=4cm] (-3,-2) -- (216:2);

\end{tikzpicture}
\vspace{1.1cm}
\caption{A 4-regular $1$-tough graph with $n=20$ that is  not homogeneously traceable. There is not a Hamilton path starting at $v$.}
\label{Figure:n20}
\end{minipage}
\hspace{0.5cm}
\begin{minipage}[c]{0.45\linewidth}
\centering
\begin{tikzpicture}[scale=0.4, transform shape]

\draw[xshift=-4cm] (1.618,1.17557) \foreach \x in {108,180,252,324} {
-- (\x:2)};
\foreach \x in {252,324,36,108,180}
\draw[fill, xshift=-4cm] (\x:2) circle (2pt);
\draw[xshift=-4cm] (-2,0) \foreach \x in {324,108,252,36,180} {
-- (\x:2)};

\draw[xshift=4cm] (-1.618,-1.17557) \foreach \x in {288,360,72,144} {
-- (\x:2)};
\foreach \x in {72,144,216,288,360}
\draw[fill, xshift=4cm] (\x:2) circle (2pt);
\draw[xshift=4cm] (2,0) \foreach \x in {144,288,72,216,360} {
-- (\x:2)};

\draw[xshift=-4cm, yshift=6cm] (1.618,1.17557) \foreach \x in {108,180,252,324} {
-- (\x:2)};
\foreach \x in {252,324,36,108,180}
\draw[fill, xshift=-4cm, yshift=6cm] (\x:2) circle (2pt);
\draw[xshift=-4cm, yshift=6cm] (-2,0) \foreach \x in {324,108,252,36,180} {
-- (\x:2)};

\draw[xshift=4cm, yshift=6cm] (-1.618,-1.17557) \foreach \x in {288,360,72,144} {
-- (\x:2)};
\foreach \x in {72,144,216,288,360}
\draw[fill, xshift=4cm, yshift=6cm] (\x:2) circle (2pt);
\draw[xshift=4cm, yshift=6cm] (2,0) \foreach \x in {144,288,72,216,360} {
-- (\x:2)};

\foreach \x in {-2,-1,1,2,4,5,7,8}{
\draw[fill] (1,\x) circle (2pt);
\draw[fill] (-1,\x) circle (2pt);
\draw (-1,\x) -- (1,\x);
}
\foreach \x in{-1,1}{
\draw (\x,-2) -- (\x,-1);
\draw (\x,1) -- (\x,2);
\draw (\x,4) -- (\x,5);
\draw (\x,7) -- (\x,8);
}
\foreach \x in{-2,1,4,7}{
\draw (1,\x) -- (-1,\x+1);
\draw (1,\x+1) -- (-1,\x);
}
\draw[fill] (0,0) circle (2pt);
\draw[fill] (0,6) circle (2pt);
\draw[fill] (-2,3) circle (2pt);
\draw[fill] (2,3) circle (2pt);
\draw (0,0) -- (-1,-1);
\draw (0,0) -- (-1,1);
\draw (0,0) -- (1,-1);
\draw (0,0) -- (1,1);

\draw (0,6) -- (-1,5);
\draw (0,6) -- (1,5);
\draw (0,6) -- (-1,7);
\draw (0,6) -- (1,7);

\draw (2,3) -- (-1,4);
\draw (2,3) -- (1,4);
\draw[xshift=4cm] (2-4,3) -- (144:2);
\draw[xshift=4cm,yshift=6cm] (2-4,3-6) -- (216:2);

\draw (-2,3) -- (-1,2);
\draw (-2,3) -- (1,2);
\draw[xshift=-4cm] (4-2,3) -- (36:2);
\draw[xshift=-4cm,yshift=6cm] (-2+4,3-6) -- (324:2);

\draw[xshift=-4cm] (-1+4,-2) -- (324:2);
\draw[xshift=4cm] (1-4,-2) -- (216:2);

\draw[xshift=-4cm, yshift=6cm] (-1+4,8-6) -- (36:2);
\draw[xshift=4cm, yshift=6cm] (1-4,8-6) -- (144:2);

\end{tikzpicture}
\caption{A 4-regular $1$-tough graph with $n=40$ that is not traceable.}
\label{Figure:n40}
\end{minipage}
\end{figure}

\section{Computational results} \label{sec:computationalresults}

All the models $\genericmodel$  were solved within the SCIP 6.0.0 framework for branch and cut  \cite{scip}, using CPLEX 12.4 \cite{cplex2009v12} as the  LP solver and the software Nauty (\cite{nauty}) to perform orbital branching. 
All the experiments were run on an Intel i7 CPU with 3.6GHz, 6 cores (4+2) and 16 GB RAM. 

Next we list some details relative to the solution  of model $\genericmodel$ for the different pairs of properties $(\cC, \lnot \cH)$: 
\begin{itemize}
\item $(c, \lnot T)$: for $16 \leq n \leq 18$ we solved model $\model{n}{c}{T}$  by solving the subproblems $Q(n, c, \lnot P_{k})$ ,  $k=8, \dots, n-1$ (see Subsection \ref{subsec:decomposition}). The separation of the connectivity constraints was performed on the integer solutions by checking the connectivity of the corresponding graph. The solution in Fig. \ref{fig:n18bis} was returned when solving problem $Q(18, c, \lnot P_{13})$;
\item $(2c, \lnot T)$: for $16 \leq n \leq 22$ we solved model $\model{n}{2c}{T}$  by solving the subproblems $Q(n, 2c, \lnot P_{k})$, $k=8, \dots, n-1$. The separation of the 2-connectivity constraints was performed on the integer solutions by checking the 2-connectivity of the corresponding graph. The solution in Fig. \ref{Figure:n22} was returned when solving problem $Q(22, 2c, \lnot P_{17})$;
\item $(1t, \lnot H)$: for $16 \leq n \leq 18$ the feasible set of model $\model{n}{1t}{H}$ was partitioned in the three sets ${\cal F}(NOR5)$,  ${\cal F}(1R5)$ and  ${\cal F}(2R5)$ containing the feasible solutions with, respectively, no $R_{5}$, a single $R_{5}$ and at least two disjoint $R_{5}$. The separation of the 1-tough constraints was performed as explained in Subsection \ref{subsec:decomposition} and the models with feasible set  ${\cal F}(NOR5)$ were solved by solving the subproblems  $Q(n, 1t, \lnot P_{k})$ for $k=8, \dots, n$. The solution in Fig.\ref{Figure:n18} was returned when solving the subproblem $Q(18, 1t, \lnot H)$;
\item $(1t, \lnot HT)$: for $18 \leq n \leq 20$ the model $\model{n}{1t}{HT}$ was solved by solving the subproblems $Q(n, 2c, \lnot P_{k}(1))$, $k=8, \dots, n-1$. The 1-toughness constraints have been separated by solving the problems $T_{\ell,\ell+1}$ with $\ell \in \{2,4\}$ for $n=18$ and $\ell \in \{2,4,5\}$ for $n=19,20$. The solution in Fig.\ref{Figure:n20} was returned when solving the subproblem $Q(20, 1t, \lnot P_{15}(1))$. Since there exists a single 2-connected graph which disconnects in 4 components by removing three vertices and this graph contains a path of length at least 17 from any vertex, the problem $T_{3,4}$ was not solved for  $k \leq 15$.  
\end{itemize}

The computational times required to solve the different models $\genericmodel$ are reported in Table \ref{Table:computation}. For each pair of properties and each value $n$, the reported time  is the sum of the times needed to solve all the subproblems used for that case. As expected, the computational times significantly increase with the number $n$ of vertices of the instances and with the type of constraints that define model $\genericmodel$. 

\begin{table}[t]
    \centering
    \begin{tabular}{|c|c|r|l|}
        \hline
        
   n & result & time (sec) & time (hours)\\
   \hhline{|=|=|=|=|}
   
       \multicolumn{4}{|l|}{connected and nontraceable} \\
   \hline
   16   & infeasible & $572 $ & $<$ 1 \\
   17   & infeasible & $3344$ & $<$  1\\
   18   & Figure \ref{fig:n18bis} & $510   $ & $<$  1\\
     \hline
      \multicolumn{4}{|l|}{$2$-connected and nontraceable} \\
   \hline
   18   & infeasible & $12510$ & $<$ 4 \\
   19   & infeasible & $66023$ & $<$ 19 \\
   20   & infeasible & $221054$ & $<$ 62  \\
   21   & infeasible & $1558501$ & $<$ 433 \\
   22   & Figure \ref{Figure:n22} & 5128 & $<$ 2\\
   \hline
     \multicolumn{4}{|l|}{$1$-tough and nonhamiltonian} \\
   \hline
   16   & infeasible & $3978  $ & $<$ 2\\
   17   & infeasible & $38856 $ & $<$ 11\\
   18   & Figure \ref{Figure:n18} & $24620 $ &$<$ 7\\
   \hline
         \multicolumn{4}{|l|}{$1$-tough  and non-homogeneously traceable} \\
   \hline
    18   & infeasible & $57356 $& $<$ 15\\
   19   & infeasible &  $249100 $ & $<$ 70\\
   20   & Figure \ref{Figure:n20} & $24700$ & $<$ 7\\
   \hline

\end{tabular}
    \caption{Computational results. The times are those reported by SCIP at the end of the computation.} \label{Table:computation}

\end{table}



\section{Conclusions}\label{sec:conclusions}

It is known that the hamiltonian properties of a 4-regular graph $G$ depend on its connectivity properties and on its order, but for several pairs ${\cal H}, {\cal C}$ of such properties, determining the smallest order $n$ such that $G$ has ${\cal C}$ but does not have ${\cal H}$ is a challenging problem. In this paper we attacked this problem by using Integer Linear Programming to formulate the search of this type of graphs.
We believe that using ILP to construct combinatorial objects (such as a graph) with given properties is a viable technique,  which we want to 
support with our work. This technique can be used, for example, to try and settle a conjecture on the existence of an object of a particular type, {\em provided its size is ``small enough''}. Indeed, a major limitation of the ILP approach is that the running times grow exponentially with the object's size (as we experienced in our computations) and even for small instances, a certain amount of ingenuity and software engineering is required to make the approach work.

In our paper we were able to settle some open questions about the 
hamiltonian properties of 4-regular graphs of small order, but still large enough to make it  quite hard to address them by a theoretical analysis. Indeed,
even for these small graphs, we were able to complete the
computations in a relatively small amount of time only thanks to 
the adoption of some strategic choices such as (i) the use of
symmetry-breaking tools; (ii) a decomposition of the cases
based on the existence/absence of paths of a given length; (iii) a preliminary analysis aimed at identifying subgraphs of the sought 
 graph with a  well defined structure, whose knowledge allowed us to considerably limit  the search space. We remark 
that even the preliminary analysis was carried out
by using an ILP approach, which demonstrates not only the  the power, 
but also the flexibility
of using Integer Programming when searching for combinatorial 
objects with given properties. 

Although we were able to determine the smallest order for 
$({\cal C},\lnot{\cal H})$-graphs for almost all pairs $({\cal C},{\cal H})$, one remaining open problem concerns the nontraceability of 1-tough 
graphs. For this problem, the order of a smallest graph would
still be too large for solving the corresponding ILP model within
an acceptable time (it is difficult to estimate how much it would take,
but, based on our experience, months or even years).

A future direction for our research would be to identify some other open problems
or conjectures in graph theory concerning graphs of relatively small order, and try to tackle these problems with an ILP approach. We believe the 
use of this tool can be vary helpful in closing some of these questions.


\bibliography{Bib_article}

\end{document}